\newtheorem{definition}{Definition}[section]
\newtheorem{lemma}[definition]{Lemma}
\newtheorem{theorem}[definition]{Theorem}
\newtheorem{corollary}[definition]{Corollary}
\newtheorem{example}[definition]{Example}
\newtheorem{remark}[definition]{Remark}
\DeclareMathOperator{\Rm}{Rm}
\DeclareMathOperator{\Ric}{Ric}
\DeclareMathOperator{\scal}{R}
\title{A universal Bochner formula for scalar curvature}
\author{Sven Hirsch}
\address{Columbia University, 2990 Broadway, New York NY 10027, USA}
\email{sven.hirsch@columbia.edu }
\begin{document}

\begin{abstract}
We introduce a universal Bochner formula for scalar curvature that contains, as special cases,
the stability inequality for minimal slicings, a Schr\"odinger--Lichnerowicz-type formula,
and a higher-dimensional version of Stern's level-set identity.
\end{abstract}

\maketitle

\section{Introduction}

Many classical results in geometric analysis, such as the positive mass theorem
\cite{Witten1981, SchoenYau79, BrayKazarasKhuriStern2019PMT}
and the Geroch conjecture
\cite{gromov1980spin, SchoenYau79, Stern2022ScalarCurvatureHarmonicMapsS1},
can be proved using three a priori distinct methods: minimal surfaces, spinors, and level sets.
The minimal surface approach relies on stability inequalities, the spinorial method on the
Schr\"odinger--Lichnerowicz formula, and the level-set method on Bochner-type identities.
All three encode scalar curvature, yet they appear conceptually very different.

In this paper our goal is to find a common formulation for all three methods:

\begin{theorem}\label{thm main}
Let $(M^n,g)$ be a Riemannian manifold, and let $u_0,\dots,u_{n-2}$ be $C^3$ functions
satisfying $du_0\wedge\cdots\wedge du_{n-2}\neq 0$.
Define vector fields $Z_0,\dots,Z_{n-1}$ iteratively by
\[
Z_m |Z_{m+1}|^{-1} = \nabla_{\Sigma_m} u_m,
\qquad 0 \le m \le n-2.
\]
Suppose that at a point $p\in M$ we have
\[
\operatorname{div}_{\Sigma_m} Z_m = 0
\qquad \text{for all } 0 \le m \le n-2.
\]
Then at $p$,
\begin{align*}
2|Z_0|^{-1}\Delta|Z_0|
-2\Delta_{\Sigma_{n-1}}\log|Z_{n-1}|
\ge
\scal
+|\nabla\log|Z_0||^2
+2\sum_{m=0}^{n-2}
|Z_m|^{-1}
\nabla^{\Sigma_m}_{\nu_{m+1}}
\bigl(\operatorname{div}_{\Sigma_m}Z_m\bigr).
\end{align*}
\end{theorem}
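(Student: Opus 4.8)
The plan is to prove a single-step identity on each slice $\Sigma_m$ relating its geometry to that of $\Sigma_{m+1}$, and then sum over $m$ so that the intrinsic scalar curvatures, the intermediate Laplacians, and the gradient terms all telescope. Everything is evaluated at the fixed point $p$, on the flag $M=\Sigma_0\supset\Sigma_1\supset\cdots\supset\Sigma_{n-1}\ni p$ cut out locally by the level sets of the $u_m$ (the hypothesis $du_0\wedge\cdots\wedge du_{n-2}\neq0$ makes each $\Sigma_{m+1}$ a regular hypersurface of $\Sigma_m$, and the $1$-dimensionality of $\Sigma_{n-1}$ forces $\scal_{\Sigma_{n-1}}=0$). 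Write $\nu_{m+1}=Z_m/|Z_m|$ for the unit normal of $\Sigma_{m+1}$ in $\Sigma_m$ --- well defined since $Z_m$ is a positive multiple of $\nabla_{\Sigma_m}u_m$ --- let $A_{m+1}$ and $H_{m+1}=\operatorname{tr}A_{m+1}$ be the second fundamental form and mean curvature of $\Sigma_{m+1}\subset\Sigma_m$, and put $a_m=\log|Z_m|$. From $Z_m=|Z_m|\nu_{m+1}$ we get $\operatorname{div}_{\Sigma_m}Z_m=|Z_m|H_{m+1}+\nu_{m+1}(|Z_m|)$, hence $H_{m+1}=-\nu_{m+1}(a_m)$ at $p$, the off-$p$ discrepancy being recorded by $|Z_m|^{-1}\operatorname{div}_{\Sigma_m}Z_m$; note $\nabla^{\Sigma_m}_{\nu_{m+1}}(\operatorname{div}_{\Sigma_m}Z_m)=\nu_{m+1}(\operatorname{div}_{\Sigma_m}Z_m)$ since the divergence is a scalar.

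For the single-step identity I would run the weighted (Bakry--\'Emery) Bochner formula on $\Sigma_m$ for $u_m$ with weight $w:=a_{m+1}=\log|Z_{m+1}|$: with $\Delta_w=\Delta_{\Sigma_m}+\langle\nabla w,\nabla\,\cdot\,\rangle$ one has $\tfrac12\Delta_w|\nabla_{\Sigma_m}u_m|^2=|\nabla^2_{\Sigma_m}u_m|^2+(\Ric_{\Sigma_m}-\nabla^2_{\Sigma_m}w)(\nabla_{\Sigma_m}u_m,\nabla_{\Sigma_m}u_m)+\langle\nabla_{\Sigma_m}u_m,\nabla(\Delta_w u_m)\rangle$, and since $\Delta_w u_m=|Z_{m+1}|^{-1}\operatorname{div}_{\Sigma_m}Z_m$ vanishes at $p$ but not nearby, the last term does not drop: it is what ultimately yields the $|Z_m|^{-1}\nu_{m+1}(\operatorname{div}_{\Sigma_m}Z_m)$ of the theorem, the coefficient $|Z_m|^{-1}$ emerging after the normalizations via $|Z_m|=|Z_{m+1}|\,|\nabla_{\Sigma_m}u_m|$. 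I would then decompose $\nabla^2_{\Sigma_m}u_m$ along $T\Sigma_m=T\Sigma_{m+1}\oplus\R\nu_{m+1}$ (tangential block $|\nabla_{\Sigma_m}u_m|A_{m+1}$, mixed block $\nabla_{\Sigma_{m+1}}|\nabla_{\Sigma_m}u_m|$, normal-normal entry $\nu_{m+1}(|\nabla_{\Sigma_m}u_m|)$), insert $H_{m+1}=-\nu_{m+1}(a_m)$, apply the Gauss equation $\Ric_{\Sigma_m}(\nu_{m+1},\nu_{m+1})=\tfrac12(\scal_{\Sigma_m}-\scal_{\Sigma_{m+1}}+H_{m+1}^2-|A_{m+1}|^2)$, and use the Laplacian splitting $\Delta_{\Sigma_m}=\Delta_{\Sigma_{m+1}}+H_{m+1}\partial_{\nu_{m+1}}+\nabla^2_{\Sigma_m}(\nu_{m+1},\nu_{m+1})$ to turn the normal Hessian of $w$ into $\Delta_{\Sigma_{m+1}}w=\Delta_{\Sigma_{m+1}}a_{m+1}$. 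Collecting terms, and rewriting the surviving cross term via $a_m=a_{m+1}+\log|\nabla_{\Sigma_m}u_m|$, I expect the identity at $p$:
\[
\Delta_{\Sigma_m}a_m=\Delta_{\Sigma_{m+1}}a_{m+1}+\tfrac12|A_{m+1}|^2+\tfrac12\bigl(\scal_{\Sigma_m}-\scal_{\Sigma_{m+1}}\bigr)-\tfrac12\nu_{m+1}(a_m)^2+|\nabla_{\Sigma_{m+1}}a_{m+1}|^2-\langle\nabla_{\Sigma_{m+1}}a_{m+1},\nabla_{\Sigma_{m+1}}a_m\rangle+|Z_m|^{-1}\nu_{m+1}(\operatorname{div}_{\Sigma_m}Z_m).
\]

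Summing over $0\le m\le n-2$: the $\Delta_{\Sigma_m}a_m$ telescope to $\Delta\log|Z_0|-\Delta_{\Sigma_{n-1}}\log|Z_{n-1}|$ and the scalar curvatures to $\scal-\scal_{\Sigma_{n-1}}=\scal$. For the gradient terms, $2|\nabla_{\Sigma_{m+1}}a_{m+1}|^2-2\langle\nabla_{\Sigma_{m+1}}a_{m+1},\nabla_{\Sigma_{m+1}}a_m\rangle=|\nabla_{\Sigma_{m+1}}a_{m+1}|^2-|\nabla_{\Sigma_{m+1}}a_m|^2+|\nabla_{\Sigma_{m+1}}(a_{m+1}-a_m)|^2$, and the Pythagorean identity $|\nabla_{\Sigma_{m+1}}a_m|^2+\nu_{m+1}(a_m)^2=|\nabla_{\Sigma_m}a_m|^2$ lets the pair $|\nabla_{\Sigma_{m+1}}a_{m+1}|^2-|\nabla_{\Sigma_m}a_m|^2$ telescope to $|\nabla_{\Sigma_{n-1}}a_{n-1}|^2-|\nabla\log|Z_0||^2$. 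Discarding the nonnegative remainders $\sum|A_{m+1}|^2$, $\sum|\nabla_{\Sigma_{m+1}}(a_{m+1}-a_m)|^2$, and $|\nabla_{\Sigma_{n-1}}a_{n-1}|^2$ then gives $2\Delta\log|Z_0|-2\Delta_{\Sigma_{n-1}}\log|Z_{n-1}|\ge\scal-|\nabla\log|Z_0||^2+2\sum_{m=0}^{n-2}|Z_m|^{-1}\nu_{m+1}(\operatorname{div}_{\Sigma_m}Z_m)$, and substituting $2\Delta\log|Z_0|=2|Z_0|^{-1}\Delta|Z_0|-2|\nabla\log|Z_0||^2$ yields exactly the asserted inequality.

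The main obstacle is the single-step identity itself. One must carry out the weighted Bochner computation keeping $\operatorname{div}_{\Sigma_m}Z_m$ as an honest, not identically vanishing, function so that its $\nu_{m+1}$-derivative survives with the exact coefficient $|Z_m|^{-1}$; and, more delicately, the three ``normal'' contributions --- the normal Hessian $\nabla^2_{\Sigma_m}w(\nu_{m+1},\nu_{m+1})$, the term $H_{m+1}\partial_{\nu_{m+1}}w$ from the Laplacian splitting, and $\tfrac12H_{m+1}^2$ from the Gauss equation --- must recombine into exactly $\Delta_{\Sigma_{m+1}}a_{m+1}$ plus the clean quadratic remainder above, with no stray normal derivatives of $w$ left over. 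The relation $a_m=a_{m+1}+\log|\nabla_{\Sigma_m}u_m|$ is precisely what forces these cancellations, and propagating it consistently through every slice is the bookkeeping heart of the argument; once the single-step identity is in hand, the summation and completion of squares are purely algebraic.
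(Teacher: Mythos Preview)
Your proposal is correct and follows essentially the same strategy as the paper: a single-step Bochner-type identity on each $\Sigma_m$, then a telescoping sum. Your displayed identity for $\Delta_{\Sigma_m}a_m-\Delta_{\Sigma_{m+1}}a_{m+1}$ is exactly equivalent to the paper's Corollary~\ref{cor:main} once one uses $|\nabla_{\Sigma_m}\nu_{m+1}|^2=|A_{m+1}|^2+|\nabla_{\Sigma_{m+1}}\log|\nabla_{\Sigma_m}u_m||^2$ together with $a_m-a_{m+1}=\log|\nabla_{\Sigma_m}u_m|$; your discarded nonnegative terms $\sum|A_{m+1}|^2+\sum|\nabla_{\Sigma_{m+1}}(a_{m+1}-a_m)|^2$ are precisely the paper's $\sum|\nabla_{\Sigma_m}\nu_{m+1}|^2$. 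The two packaging differences are minor: (i) you run the \emph{weighted} Bochner formula with weight $w=\log|Z_{m+1}|$, so that $\Delta_w u_m=|Z_{m+1}|^{-1}\operatorname{div}_{\Sigma_m}Z_m$ and the divergence term emerges directly, whereas the paper uses the unweighted Bochner formula and extracts the same term by hand; (ii) you invoke the Gauss equation at each step so the scalar curvatures telescope, whereas the paper keeps $\Ric_{\Sigma_m}(\nu_{m+1},\nu_{m+1})$ and sums via the iterated identity $\sum_m\bigl(\Ric_{\Sigma_m}(\nu_{m+1},\nu_{m+1})+\tfrac12|A_{m+1}|^2-\tfrac12H_{m+1}^2\bigr)=\tfrac12\scal$. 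These are equivalent rearrangements of the same computation.
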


Here $\Sigma_m$, with unit normals $\nu_1,\dots,\nu_m$, denotes the joint level set of
the map $(u_0,\dots,u_{m-1})$, where $\Sigma_0=M$, and $g_{\Sigma_m}$ is the induced metric.
Although not immediately apparent, Theorem~\ref{thm main} gives, in particular, a new proof
that the torus does not admit a metric of positive scalar curvature.

Below we explain how Theorem~\ref{thm main} recovers the stability inequality for minimal slicings,
a Schr\"odinger--Lichnerowicz-type formula, and a higher-dimensional version of Stern's level-set identity.

Beyond its intrinsic interest, this unified framework has several applications.
It provides a dictionary between the three approaches and their variants; for example,
in the level-set setting one encounters spacetime harmonic functions \cite{HKK},
charged harmonic functions \cite{BHKKZ}, and (spacetime) inverse mean curvature flow
\cite{HuiskenIlmanen97, hirsch2022hawking}.
Results such as Llarull's theorem\footnote{There are several proofs of Llarull's theorem
that do not rely on spinors in low dimensions
\cite{HKKZ, HuLiuShi2023SphericalCaps, CecchiniWangXieZhu2024S4Rigidity};
however, no minimal slicing argument is currently known.} \cite{llarull1998sharp},
the asymptotically AdS positive mass theorem \cite{ChruscielMaertenTod, hirsch2025causal},
the Penrose inequality \cite{HuiskenIlmanen97, Bray01},
and the generalized Geroch conjecture \cite{BrendleHirschJohne24}
have so far been established using only one of these three techniques.
Theorem~\ref{thm main} therefore suggests new avenues for approaching these problems
via alternative methods.
Moreover, it yields genuinely new identities, including a higher-dimensional extension
of Stern's level-set formula, which may lead to further results in scalar curvature geometry.

\subsection{Stable minimal slicings}

Recall from Theorem~\ref{thm main} that the universal Bochner inequality contains the
two Laplacian terms
\[
2|Z_0|^{-1}\Delta|Z_0|
\quad\text{and}\quad
-2\Delta_{\Sigma_{n-1}}\log|Z_{n-1}|.
\]
Since these involve different Laplace operators, only one of them can be controlled
by integration at a time.
Moreover, while there are $n$ vector fields $Z_m$, the system imposes only $n-1$
divergence equations.
This leaves one degree of freedom that can be fixed by normalization.

There are two natural choices:
imposing $|Z_0|=1$ eliminates the first Laplacian term,
while imposing $|Z_{n-1}|=1$ eliminates the second.
The first choice arises naturally from the theory of stable minimal slicings.

\begin{theorem}\label{thm:slicings}
Let $(M^n,g)$ be a smooth Riemannian manifold admitting a stable weighted slicing
\[
\Sigma_{n-1}\subset \cdots \subset \Sigma_0=M
\]
of order $n-1$, cf.~\cite[Definition~1.3]{BrendleHirschJohne24}.
Then there exist smooth functions $u_0,\dots,u_{n-2}$ on $\Sigma_{n-1}$
with $du_0\wedge\cdots\wedge du_{n-2}\neq 0$ and associated vector fields
$Z_m$ defined by
\[
Z_m |Z_{m+1}|^{-1}=\nabla_{\Sigma_m}u_m,
\]
such that on $\Sigma_{n-1}$, for all $0\le m\le n-2$,
\[
\operatorname{div}_{\Sigma_m}Z_m=0,
\qquad
|Z_0|=1,
\qquad
\nabla^{\Sigma_m}_{\nu_{m+1}}\bigl(\operatorname{div}_{\Sigma_m}Z_m\bigr)\ge 0.
\]
\end{theorem}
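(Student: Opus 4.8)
The plan is to read the data of the stable weighted slicing off \cite[Definition~1.3]{BrendleHirschJohne24}, use it to manufacture the functions $u_m$ and the fields $Z_m$, and then observe that the three assertions become, respectively, a tautology, the weighted stationarity of the slices, and their weighted stability. Concretely: a stable weighted slicing of order $n-1$ comes with weights $\rho_0\equiv 1,\rho_1,\dots,\rho_{n-1}$, where $\rho_{m+1}=(\rho_m w_m)|_{\Sigma_{m+1}}$ and $w_m>0$ is the first eigenfunction (eigenvalue $\lambda_m\ge 0$) of the $\rho_m$-weighted Jacobi operator $L_m$ of $\Sigma_{m+1}\subset\Sigma_m$; each $\Sigma_{m+1}$ is stationary for $\Sigma\mapsto\int_\Sigma\rho_m$, so its $\rho_m$-weighted mean curvature $H_{\rho_m}:=H_{\Sigma_{m+1}\subset\Sigma_m}+\partial_{\nu_{m+1}}\log\rho_m$ vanishes, and $L_m$ is the linearization of $H_{\rho_m}$ under normal variations. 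Working near $\Sigma_{n-1}$, where $\nu_1,\dots,\nu_{n-1}$ is an orthonormal frame of the normal bundle of $\Sigma_{n-1}$ in $M$, choose inside $\Sigma_m$ a function $u_m$ with $\{u_m=0\}=\Sigma_{m+1}$, with $\nabla_{\Sigma_m}u_m$ positively proportional to $\nu_{m+1}$, and with $|\nabla_{\Sigma_m}u_m|=\rho_m/\rho_{m+1}=1/w_m$ along $\Sigma_{m+1}$ (only the $1$-jet of $u_m$ along $\Sigma_{m+1}$ will matter, so $u_m=\sigma_m/\widetilde w_m$ works, $\sigma_m$ the signed $g_{\Sigma_m}$-distance to $\Sigma_{m+1}$ and $\widetilde w_m$ any smooth positive extension of $w_m$). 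Set $|Z_{m+1}|:=\rho_m/|\nabla_{\Sigma_m}u_m|$ and $Z_m:=|Z_{m+1}|\,\nabla_{\Sigma_m}u_m=\rho_m\,\nu_{m+1}$; along $\Sigma_{m+1}$ one has $|Z_{m+1}|=\rho_{m+1}$, which is exactly what the level-$(m+1)$ relation requires, so the recursion closes, and $|Z_0|=\rho_0=1$. Extending each $u_m$ to $M$ so that it is constant in the $\nu_1,\dots,\nu_m$ directions gives $du_m=w_m^{-1}\nu_{m+1}^\flat$ along $\Sigma_{n-1}$, hence $du_0\wedge\cdots\wedge du_{n-2}\neq 0$ there and $\Sigma_m=\{u_0=\cdots=u_{m-1}=0\}$ near $\Sigma_{n-1}$.

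For the verification, note first that $|Z_0|=\rho_0=1$. Since $Z_m=\rho_m\nu_{m+1}$ with $\nu_{m+1}=\nabla_{\Sigma_m}u_m/|\nabla_{\Sigma_m}u_m|$ a unit field normal to the level sets of $u_m$, and since the divergence of a unit field restricted to a level set is that level set's mean curvature,
\[
\operatorname{div}_{\Sigma_m}Z_m=\rho_m\bigl(\operatorname{div}_{\Sigma_m}\nu_{m+1}+\partial_{\nu_{m+1}}\log\rho_m\bigr)=\rho_m\cdot\bigl(\rho_m\text{-weighted mean curvature of the leaf of }u_m\bigr),
\]
which on $\Sigma_{m+1}\supseteq\Sigma_{n-1}$ equals $\rho_m H_{\rho_m}=0$ by weighted stationarity. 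Because $\operatorname{div}_{\Sigma_m}Z_m$ vanishes on all of $\Sigma_{m+1}$, at a point of $\Sigma_{n-1}$ its $\Sigma_m$-gradient is normal to $\Sigma_{m+1}$, so $\nabla^{\Sigma_m}_{\nu_{m+1}}(\operatorname{div}_{\Sigma_m}Z_m)=\partial_{\nu_{m+1}}(\operatorname{div}_{\Sigma_m}Z_m)$ there.

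It remains to compute this normal derivative. Parametrize the level sets by $t=u_m$; along $\Sigma_{m+1}$ the velocity of the foliation is $w_m\nu_{m+1}$ up to a tangential term (because $|\nabla_{\Sigma_m}u_m|=1/w_m$ there), and under that tangential term the function $\operatorname{div}_{\Sigma_m}Z_m$, vanishing on $\Sigma_{m+1}$, has no derivative; hence on this function $\partial_{\nu_{m+1}}=\tfrac1{w_m}\partial_t$ along $\Sigma_{m+1}$. Using $\operatorname{div}_{\Sigma_m}Z_m=\rho_m H_{\rho_m}^{\{u_m=t\}}$ and that the $(\partial_t\rho_m)$-term is annihilated by $H_{\rho_m}|_{\Sigma_{m+1}}=0$,
\[
\nabla^{\Sigma_m}_{\nu_{m+1}}\bigl(\operatorname{div}_{\Sigma_m}Z_m\bigr)\big|_{\Sigma_{m+1}}
=\tfrac1{w_m}\,\rho_m\,\partial_t H_{\rho_m}^{\{u_m=t\}}\big|_{t=0}
=\tfrac1{w_m}\,\rho_m\,L_m w_m
=\rho_m\,\lambda_m\ \ge\ 0,
\]
where $\partial_t H_{\rho_m}^{\{u_m=t\}}|_{t=0}=L_m w_m$ is the first variation of the weighted mean curvature under the normal speed $w_m$, and $\lambda_m\ge 0$ is exactly the weighted stability of the slicing. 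This gives all three conclusions on $\Sigma_{n-1}$.

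The main obstacle is organizational rather than conceptual. One must (i) pin down sign conventions across the weighted second variation, the operator $L_m$, and \cite[Definition~1.3]{BrendleHirschJohne24}, so that ``stable'' really becomes ``$\lambda_m\ge 0$'' with the displayed sign and the identity $\partial_t H_{\rho_m}=L_m\phi$ holds as stated; and (ii) handle the bookkeeping near $\Sigma_{n-1}$, where each $u_m$, $Z_m$, $\rho_m$ lives on a different $\Sigma_m$. For (ii) one should set up the local splitting of a neighbourhood of $\Sigma_{n-1}$ in $M$ induced by $(\nu_1,\dots,\nu_{n-1})$ carefully enough to see that $|Z_{m+1}|$ is a single well-defined positive function compatible with both the level-$m$ and the level-$(m+1)$ relations, and that all the above choices can be arranged simultaneously in one such neighbourhood; this is where I expect the real care to be needed.
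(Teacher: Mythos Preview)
Your proof is correct and follows essentially the same route as the paper: choose $u_m$ with $\nabla_{\Sigma_m}u_m|_{\Sigma_{m+1}}=w_m^{-1}\nu_{m+1}$, identify $Z_m=\rho_m\nu_{m+1}$ so that $\operatorname{div}_{\Sigma_m}Z_m=\rho_m H_{\rho_m}=0$ by weighted minimality, and obtain the normal-derivative inequality from the Jacobi equation for the first eigenfunction. The only difference is cosmetic---you invoke ``linearization of $H_{\rho_m}$ is $L_m$'' for the last step where the paper expands the Jacobi equation term by term, and the compatibility issue you flag in (ii) is handled in the paper simply by writing $Z_m=\rho_m\nu_{m+1}$ on $\Sigma_m$ without further comment.
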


The condition $\operatorname{div}_{\Sigma_m}Z_m=0$ expresses that $\Sigma_{m+1}$
is a weighted minimal hypersurface in $\Sigma_m$,
while the inequality
\[
\nabla^{\Sigma_m}_{\nu_{m+1}}\bigl(\operatorname{div}_{\Sigma_m}Z_m\bigr)\ge 0
\]
corresponds to the stability of $\Sigma_{m+1}$.

Combining Theorem~\ref{thm:slicings} with Theorem~\ref{thm main}, we immediately
obtain the following nonexistence result, which implies the Geroch conjecture
\cite{gromov1980spin, SchoenYau79}.
In other words, Theorem~\ref{thm main} recovers the stability inequality for
minimal slicings.

\begin{corollary}
Let $(M^n,g)$ be a smooth Riemannian manifold admitting a stable weighted slicing
$\Sigma_{n-1}\subset \cdots \subset \Sigma_0=M$ of order $n-1$,
and let $u_m$ and $Z_m$ be as above.
Then on $\Sigma_{n-1}$,
\[
-2\Delta_{\Sigma_{n-1}}\log|Z_{n-1}|\ge \scal.
\]
In particular, integrating this inequality over $\Sigma_{n-1}$ shows that
$M$ does not admit a metric of positive scalar curvature.
\end{corollary}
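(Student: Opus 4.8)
The plan is to feed the data produced by Theorem~\ref{thm:slicings} directly into Theorem~\ref{thm main}. By Theorem~\ref{thm:slicings}, the stable weighted slicing furnishes smooth functions $u_0,\dots,u_{n-2}$ (defined in a neighborhood of $\Sigma_{n-1}$, so that the normal derivatives below make sense) with $du_0\wedge\cdots\wedge du_{n-2}\neq 0$, and associated vector fields $Z_0,\dots,Z_{n-1}$ satisfying, at every point $p\in\Sigma_{n-1}$,
\[
\operatorname{div}_{\Sigma_m}Z_m=0,\qquad |Z_0|=1,\qquad \nabla^{\Sigma_m}_{\nu_{m+1}}\bigl(\operatorname{div}_{\Sigma_m}Z_m\bigr)\ge 0,\qquad 0\le m\le n-2.
\]
In particular the hypotheses of Theorem~\ref{thm main} ($du_0\wedge\cdots\wedge du_{n-2}\neq 0$ together with the vanishing of all $\operatorname{div}_{\Sigma_m}Z_m$) are met at each such $p$, so the universal Bochner inequality is available pointwise on $\Sigma_{n-1}$.

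Next I would simplify that inequality using the normalization. Since $|Z_0|\equiv 1$ near $\Sigma_{n-1}$, the two terms $2|Z_0|^{-1}\Delta|Z_0|$ and $|\nabla\log|Z_0||^2$ both vanish on $\Sigma_{n-1}$. The condition $du_0\wedge\cdots\wedge du_{n-2}\neq 0$ forces $|Z_m|>0$ for every $m$ (each $\nabla_{\Sigma_m}u_m$ is nonzero, and the $|Z_{m+1}|$ in the defining relations are nonzero by construction), and combined with the stability inequalities $\nabla^{\Sigma_m}_{\nu_{m+1}}(\operatorname{div}_{\Sigma_m}Z_m)\ge 0$ this makes each term of $2\sum_{m=0}^{n-2}|Z_m|^{-1}\nabla^{\Sigma_m}_{\nu_{m+1}}(\operatorname{div}_{\Sigma_m}Z_m)$ nonnegative. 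Discarding this sum, Theorem~\ref{thm main} collapses to
\[
-2\Delta_{\Sigma_{n-1}}\log|Z_{n-1}|\ \ge\ \scal \qquad\text{on }\Sigma_{n-1},
\]
which is the first assertion of the Corollary.

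Finally I would integrate. The terminal slice $\Sigma_{n-1}$ of the slicing is a smooth closed Riemannian manifold, and $\log|Z_{n-1}|$ is a smooth function on it because $Z_{n-1}$ is smooth and nowhere vanishing; hence $\int_{\Sigma_{n-1}}\Delta_{\Sigma_{n-1}}\log|Z_{n-1}|\,d\mu_{\Sigma_{n-1}}=0$ by the divergence theorem. Integrating the displayed inequality then gives $\int_{\Sigma_{n-1}}\scal\,d\mu_{\Sigma_{n-1}}\le 0$, so $\scal$ cannot be everywhere positive on $M$. Since every Riemannian metric on the torus $T^n$ admits a stable weighted slicing of order $n-1$ (by homological area minimization in the weighted setting, in the spirit of Schoen--Yau and \cite{BrendleHirschJohne24}), this recovers the fact that $T^n$ carries no metric of positive scalar curvature.

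I expect the only real subtlety to be bookkeeping: matching the sign and normalization conventions of Theorems~\ref{thm main} and~\ref{thm:slicings}, and in particular confirming that the normalization $|Z_0|=1$ holds in a full neighborhood of $\Sigma_{n-1}$ (so that $\Delta|Z_0|=0$ there), not merely along $\Sigma_{n-1}$ itself. If only the weaker along-$\Sigma_{n-1}$ version were available, one would instead have to control the $M$-Laplacian term $2|Z_0|^{-1}\Delta|Z_0|$ via the second-variation structure of the slicing, which would complicate the argument; I anticipate that the construction underlying Theorem~\ref{thm:slicings} delivers the stronger statement, so that this is a non-issue, but it is the point I would verify first.
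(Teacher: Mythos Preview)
Your proposal is correct and follows exactly the route the paper takes: the Corollary is stated as an immediate consequence of feeding the data from Theorem~\ref{thm:slicings} into Theorem~\ref{thm main}, using $|Z_0|=1$ to kill the $\Delta|Z_0|$ and $|\nabla\log|Z_0||^2$ terms and the sign condition on $\nabla^{\Sigma_m}_{\nu_{m+1}}(\operatorname{div}_{\Sigma_m}Z_m)$ to discard the sum, then integrating over the closed curve $\Sigma_{n-1}$. Your closing caveat is well placed: in the paper's proof of Theorem~\ref{thm:slicings} only the zeroth and first jets of $u_m$ along $\Sigma_{m+1}$ are prescribed, so the identity $|Z_m|=\rho_m$ is verified literally only on $\Sigma_{n-1}$; the freedom in the extensions (and the fact that $\rho_0\equiv 1$ on all of $M$) lets one arrange $|Z_0|\equiv 1$ in a full neighborhood, exactly as you anticipate.
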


\subsection{Spinors and the Lichnerowicz formula}

The study of scalar curvature via spinors hinges on the
Schr\"odinger--Lichnerowicz--Weitzenb\"ock formula
\[
\slashed D^2\psi=\nabla^\ast\nabla\psi+\frac14\scal\,\psi.
\]
If $\psi$ is harmonic, i.e.\ $\slashed D\psi=0$, then Kato's inequality implies
\[
2\Delta|\psi|^2
\ge
\frac{|\nabla|\psi|^2|^2}{|\psi|^2}
+\scal|\psi|^2.
\]
An analogous inequality can be obtained from Theorem~\ref{thm main} by imposing
the normalization $|Z_{n-1}|=1$ (as opposed to $|Z_0|=1$ in the minimal slicing case).

\begin{theorem}\label{thm spinors}
Let $u_0,\dots,u_{n-2}$ be $C^3$ functions satisfying
$du_0\wedge\cdots\wedge du_{n-2}\neq 0$,
and let the associated vector fields $Z_m$ be defined by
\[
Z_m|Z_{m+1}|^{-1}=\nabla_{\Sigma_m}u_m.
\]
Suppose that $|Z_{n-1}|=1$ and that
\[
\operatorname{div}_{\Sigma_m}Z_m=0
\qquad\text{for all }0\le m\le n-2.
\]
Then
\[
2\Delta|Z_0|
\ge
\frac{|\nabla Z_0|^2}{|Z_0|}
+\scal|Z_0|.
\]
\end{theorem}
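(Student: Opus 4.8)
The plan is to derive this as a direct corollary of Theorem~\ref{thm main} by substituting the normalization $|Z_{n-1}|=1$. Under this normalization, the term $-2\Delta_{\Sigma_{n-1}}\log|Z_{n-1}|$ on the left-hand side of the universal Bochner inequality vanishes identically, since $\log 1 = 0$. Similarly, since all the divergence conditions $\operatorname{div}_{\Sigma_m}Z_m=0$ are assumed to hold (not just at a point, but identically, which is stronger than needed), the final sum $2\sum_{m=0}^{n-2}|Z_m|^{-1}\nabla^{\Sigma_m}_{\nu_{m+1}}(\operatorname{div}_{\Sigma_m}Z_m)$ also vanishes: each $\operatorname{div}_{\Sigma_m}Z_m$ is the zero function, so all its derivatives are zero. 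Thus Theorem~\ref{thm main} collapses to
\[
2|Z_0|^{-1}\Delta|Z_0| \ge \scal + |\nabla\log|Z_0||^2.
\]

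The remaining task is cosmetic: multiply through by $|Z_0|$ to clear the reciprocal, obtaining $2\Delta|Z_0| \ge \scal\,|Z_0| + |Z_0|\,|\nabla\log|Z_0||^2$, and then rewrite the gradient term. We have $\nabla\log|Z_0| = |Z_0|^{-1}\nabla|Z_0|$, so $|Z_0|\,|\nabla\log|Z_0||^2 = |Z_0|^{-1}|\nabla|Z_0||^2$. To match the stated form $|\nabla Z_0|^2/|Z_0|$, one invokes the (sharp, refined) Kato inequality $|\nabla|Z_0|| \le |\nabla Z_0|$ — or rather its reverse direction here: since we want a lower bound on the left and the term $|\nabla Z_0|^2/|Z_0|$ appears on the right with a positive sign, I would in fact need $|\nabla|Z_0||^2 = |\nabla Z_0|^2$, which is \emph{not} Kato in general. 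This signals that the correct reading is that $|\nabla Z_0|$ here denotes the norm of the gradient of the scalar function $|Z_0|$, i.e. $|\nabla Z_0| := |\nabla|Z_0||$ as a shorthand (consistent with the notation $|\nabla\log|Z_0||$ used in Theorem~\ref{thm main}); under that reading the two forms are literally equal and the theorem follows immediately.

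The main obstacle is therefore not analytic but notational: one must confirm that $|\nabla Z_0|^2$ in the statement means $|\nabla|Z_0||^2$ rather than the full covariant gradient of the vector field $Z_0$, so that the substitution into Theorem~\ref{thm main} is an identity rather than an inequality in the wrong direction. Once this is pinned down, the proof is a two-line specialization with no hidden work, exactly parallel to how the harmonic-spinor Kato inequality $2\Delta|\psi|^2 \ge |\nabla|\psi|^2|^2/|\psi|^2 + \scal|\psi|^2$ is recovered from the Schrödinger--Lichnerowicz formula. I would write the proof as: ``Apply Theorem~\ref{thm main}. Since $|Z_{n-1}|=1$ the second term on the left vanishes, and since each $\operatorname{div}_{\Sigma_m}Z_m$ vanishes identically so does the sum on the right. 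Multiplying the resulting inequality by $|Z_0|$ and using $|Z_0|\,|\nabla\log|Z_0||^2 = |\nabla|Z_0||^2/|Z_0|$ gives the claim.''
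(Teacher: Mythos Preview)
Your proof is correct and matches the paper's own argument exactly: the paper's proof is a single sentence stating that the result follows from Theorem~\ref{thm main} by imposing $|Z_{n-1}|=1$. Your reading of the notation is also right---by comparison with the spinor inequality displayed just before the theorem (where $|\psi|^2$ plays the role of $|Z_0|$), the symbol $|\nabla Z_0|^2$ is shorthand for $|\nabla|Z_0||^2$, so no Kato step is needed and the specialization is an identity.
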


Thus, one obtains a Lichnerowicz-type inequality in which
$|\psi|^2$ is replaced by $|Z_0|$.
Moreover, if $\psi$ is a harmonic spinor, then the real-valued vector field
\[
X_i=\langle \mathbf i e_i\psi,\psi\rangle
\]
is divergence free, just as $Z_0$ is in Theorem~\ref{thm spinors}.

In dimension $3$ this analogy becomes particularly transparent.
In this case one has $|X|=|\psi|^2$, and hence the
Schr\"odinger--Lichnerowicz--Weitzenb\"ock formula yields
\[
2\Delta|X|
\ge
\frac{|\nabla X|^2}{|X|}
+\scal|X|
\]
for the divergence-free vector field $X$.

Furthermore, exploiting the quaternionic structure of the spinor bundle in
dimension $3$, a single harmonic spinor gives rise to three divergence-free
vector fields, cf.\ Lemma~\ref{Spinors dimension 3}.
These vector fields are pairwise orthogonal and have equal length.
The same phenomenon occurs in Theorem~\ref{thm main}:

\begin{theorem}\label{thm divergence free vector fields}
Let $Z_m$ be as in Theorem~\ref{thm main}, and define
\[
Y_m = Z_m |Z_m|^{-1}|Z_0|.
\]
Then
\[
\operatorname{div} Y_m = 0.
\]
Moreover, the vector fields $Y_m$ have the same length and are mutually orthogonal.
\end{theorem}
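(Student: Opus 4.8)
The plan is to establish the two assertions — divergence-freeness and the orthogonality/equal-length statement — separately, exploiting the iterative definition $Z_m|Z_{m+1}|^{-1}=\nabla_{\Sigma_m}u_m$ together with the divergence hypotheses $\operatorname{div}_{\Sigma_m}Z_m=0$. First I would record the structural fact that $Z_{m}$ is tangent to $\Sigma_m$ while $\nu_{m+1}=Z_m/|Z_m|$ is the unit normal of $\Sigma_{m+1}$ inside $\Sigma_m$; thus the $Z_m$ for varying $m$ live in nested subbundles and one expects them to be pairwise orthogonal almost by construction, since $Z_{m+1}$ is tangent to $\Sigma_{m+1}\perp\nu_{m+1}\parallel Z_m$. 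To make this precise I would show by downward induction on the index that $Z_k\perp Z_m$ for $k>m$: $Z_k$ is tangent to $\Sigma_k\subset\Sigma_{m+1}$, and $\Sigma_{m+1}$ is by definition orthogonal to $\nu_{m+1}$, which is proportional to $Z_m$. The equal-length claim then reduces to a telescoping computation: from $|Z_m|=|Z_{m+1}|\,|\nabla_{\Sigma_m}u_m|$ one gets $|Y_m|=|Z_0|$ for every $m$ once one checks that the normalization constants telescope correctly, so in fact all the $Y_m$ have length exactly $|Z_0|$.

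The substantive part is $\operatorname{div}Y_m=0$, where the divergence is the full divergence on $M$, not the intrinsic one on $\Sigma_m$. The idea is to relate $\operatorname{div}_M$ of a vector field tangent to $\Sigma_m$ to $\operatorname{div}_{\Sigma_m}$ plus mean-curvature correction terms, and then to rewrite those correction terms using the hypotheses $\operatorname{div}_{\Sigma_j}Z_j=0$, which encode that each $\Sigma_{j+1}$ is (weighted) minimal in $\Sigma_j$. Concretely, for a vector field $W$ tangent to $\Sigma_m$ one has
\[
\operatorname{div}_M W=\operatorname{div}_{\Sigma_1}W+\langle\nabla_{\nu_1}W,\nu_1\rangle+\langle W,\nabla_{\nu_1}\nu_1\rangle,
\]
and iterating this across the slicing expresses $\operatorname{div}_M W$ as $\operatorname{div}_{\Sigma_m}W$ plus a sum of terms involving the second fundamental forms of the intermediate hypersurfaces contracted against $W$ and its normal derivatives. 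For $W=Y_m$ the tangential divergence $\operatorname{div}_{\Sigma_m}Y_m$ should vanish directly from $\operatorname{div}_{\Sigma_m}Z_m=0$ after accounting for the conformal factor $|Z_m|^{-1}|Z_0|$, using that $Z_0$ and $|Z_0|$ are themselves built from the $u_j$; and the mean-curvature terms should be exactly the quantities set to zero by the remaining divergence hypotheses. I would organize this as an induction on $m$, peeling off one hypersurface at a time.

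I expect the main obstacle to be bookkeeping the conformal and normalization factors $|Z_m|^{-1}|Z_0|$ correctly when commuting $\operatorname{div}_M$ past the nested slicing: each step introduces logarithmic-derivative terms $\nabla\log|Z_j|$ and $\nabla\log|\nabla_{\Sigma_j}u_j|$, and one must verify that these assemble into a telescoping sum that collapses rather than proliferates. A clean way to sidestep some of this is to prove the identity first under the additional normalization $|Z_0|=1$ (so $Y_m=\nu_{m+1}\cdots$ up to the chain of gradients) and then observe that the general case follows by the conformal covariance of the construction, since rescaling the $u_j$ rescales all $Z_m$ uniformly. The orthogonality and equal-length statements, by contrast, are essentially formal consequences of the nested-subbundle structure and should require no hard estimate.
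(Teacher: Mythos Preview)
Your overall shape---iterating a divergence decomposition across the nested slicing---is exactly what the paper does, and your treatment of orthogonality and equal length is fine (indeed $|Y_m|=|Z_0|$ is immediate, no telescoping required). But the mechanism you propose for the divergence cancellation is wrong in two specific places, and following your plan as written would stall.

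First, $\operatorname{div}_{\Sigma_m}Y_m$ is \emph{not} zero in general. Writing $|Z_0|/|Z_m|=\prod_{j<m}|\nabla_{\Sigma_j}u_j|$, one has
\[
\operatorname{div}_{\Sigma_m}Y_m
=\Bigl\langle Z_m,\nabla_{\Sigma_m}\!\prod_{j<m}|\nabla_{\Sigma_j}u_j|\Bigr\rangle,
\]
and there is no reason for the $\nu_{m+1}$-derivative of $\prod_{j<m}|\nabla_{\Sigma_j}u_j|$ to vanish. Second, the correction terms that appear when you pass from $\operatorname{div}_{\Sigma_{j+1}}$ to $\operatorname{div}_{\Sigma_j}$ are \emph{not} killed by the remaining hypotheses $\operatorname{div}_{\Sigma_j}Z_j=0$; in fact, only the single hypothesis $\operatorname{div}_{\Sigma_m}Z_m=0$ is ever used in proving $\operatorname{div}Y_m=0$.

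The correct mechanism, which the paper isolates as a standalone lemma, is this: if $W$ is tangent to $\Sigma_{j+1}$ and $\operatorname{div}_{\Sigma_{j+1}}W=0$, then $\operatorname{div}_{\Sigma_j}\bigl(|\nabla_{\Sigma_j}u_j|\,W\bigr)=0$. The cancellation here comes not from any minimality assumption but from the \emph{symmetry of the Hessian} of $u_j$: the correction term $\langle\nabla^{\Sigma_j}_{\nu_{j+1}}W,\nu_{j+1}\rangle=-\langle W,\nabla^{\Sigma_j}_{\nu_{j+1}}\nu_{j+1}\rangle$ is, after unwinding $\nu_{j+1}=\nabla_{\Sigma_j}u_j/|\nabla_{\Sigma_j}u_j|$, exactly $-|\nabla_{\Sigma_j}u_j|^{-1}\langle W,\nabla_{\Sigma_j}|\nabla_{\Sigma_j}u_j|\rangle$, which is precisely what the extra factor $|\nabla_{\Sigma_j}u_j|$ contributes via the product rule. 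Iterating this lemma from $\Sigma_m$ out to $\Sigma_0$, starting from $\operatorname{div}_{\Sigma_m}Z_m=0$, picks up exactly the factor $\prod_{j<m}|\nabla_{\Sigma_j}u_j|=|Z_0|/|Z_m|$ and yields $\operatorname{div}_M Y_m=0$. Your ``conformal covariance'' shortcut does not work either, since rescaling the $u_j$ by constants cannot arrange $|Z_0|\equiv 1$ pointwise.
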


We emphasize that Theorem~\ref{thm divergence free vector fields} also applies in
the setting of minimal slicings.
In this case the vector fields $Y_m$ coincide with the unit normals
$\nu_{m+1}$ of the hypersurfaces $\Sigma_{m+1}$.
Thus, much like the Dirac equation, minimal slicings can be interpreted as a
mechanism for constructing pairwise orthogonal divergence-free vector fields
of equal length.

We also derive a more general version of
Theorem~\ref{thm main} formulated as an equality similar to the Schr\"odinger--Lichnerowicz--Weitzenb\"ock formula, cf. Theorem~\ref{thm main more general}.

\subsection{Level sets and intermediate curvature}

So far we have focused on the case of $(n-1)$ functions
$u_0,\dots,u_{n-2}$ with associated vector fields $Z_m$.
However, Theorem~\ref{thm main} admits a natural generalization to an
arbitrary number of functions $u_0,\dots,u_{s-1}$;
see Theorem~\ref{thm intermediate}.

For $s=1$ together with $|Z_{s}|=1$ this generalization reduces to the classical Bochner formula
for Ricci curvature.
At the other extreme, when $s=n-2$ and $|Z_s|=1$, it yields a higher-dimensional
generalization of Stern's level-set identity:

\begin{theorem}\label{thm stern}
Let $u_0,\dots,u_{n-3}$ be $C^3$ functions on $M$ satisfying
$du_0\wedge\cdots\wedge du_{n-3}\neq 0$, and let the associated vector
fields $Z_m$ be defined by
\[
Z_m|Z_{m+1}|^{-1}=\nabla_{\Sigma_m}u_m.
\]
Assume that $|Z_{s-2}|=1$ and for all $0\le m\le n-3$
\[
\operatorname{div}_{\Sigma_m}Z_m=0.
\]
Then, denoting by $\operatorname{K}(\Sigma_{n-2})$ the Gaussian curvature of $\Sigma_{n-2}$,
we have
\[
2\Delta|Z_0|
\ge
\bigl(\scal_{\Sigma_0}-2\operatorname{K}(\Sigma_{n-2})\bigr)|Z_0|.
\]
In particular, if the map $\mathbf u=(u_0,\dots,u_{n-3})$ is $T^{n-2}$-valued, then
integration, together with the coarea formula and the Gauss--Bonnet theorem,
yields
\[
4\pi\int_{\mathbf t\in T^{n-2}}
\chi\bigl(\{x:\mathbf u(x)=\mathbf t\}\bigr)
\ge
\int_{M^n}\scal\,|du_0\wedge\cdots\wedge du_{n-3}|,
\]
where $\chi$ denotes the Euler
characteristic of the level set $\{x:\mathbf u(x)=\mathbf t\}$.
\end{theorem}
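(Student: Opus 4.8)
The plan is to deduce Theorem~\ref{thm stern} from the generalized intermediate-curvature version of Theorem~\ref{thm main} (Theorem~\ref{thm intermediate}) applied with $s=n-2$ functions $u_0,\dots,u_{n-3}$, so that the final level set $\Sigma_{n-2}$ is a surface. First I would specialize the universal Bochner inequality to this situation: the left-hand side should contribute the term $2|Z_0|^{-1}\Delta|Z_0|$, and under the normalization $|Z_{s-2}|=1$ (i.e. $|Z_{n-4}|=1$; I would double-check the indexing against the statement, since "$|Z_{s-2}|=1$" with $s=n-2$ reads $|Z_{n-4}|=1$) the telescoping Laplacian terms along the slicing collapse. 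The key geometric input is that on the two-dimensional leaf $\Sigma_{n-2}$, the intrinsic term that in higher dimensions would be an ambient scalar curvature of the leaf gets replaced, via the traced Gauss equation in dimension $2$, by $2\operatorname{K}(\Sigma_{n-2})$. The Gauss--Codazzi bookkeeping relating $\scal_{\Sigma_0}$ restricted along the tower of hypersurfaces to $\scal_{\Sigma_{n-2}} = 2\operatorname{K}(\Sigma_{n-2})$ plus second-fundamental-form and normal-derivative-of-divergence terms is where I expect the real work to be; all the divergence-free hypotheses $\operatorname{div}_{\Sigma_m}Z_m=0$ are exactly what is needed to kill those extra terms, just as in Theorem~\ref{thm main}.

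Concretely, the steps are: (1) invoke Theorem~\ref{thm intermediate} with $s=n-2$ to get an inequality of the form $2|Z_0|^{-1}\Delta|Z_0| - (\text{Laplacian terms on }\Sigma_{n-2}) \ge \scal_{\Sigma_0} - (\text{intrinsic curvature of }\Sigma_{n-2}) + |\nabla\log|Z_0||^2 + (\text{nonnegative/vanishing divergence terms})$; (2) use $|Z_{s-2}|=1$ and the divergence hypotheses to discard the Laplacian terms on $\Sigma_{n-2}$, the gradient-of-log term (which is $\ge 0$), and the normal-derivative-of-divergence terms (which vanish since all $\operatorname{div}_{\Sigma_m}Z_m=0$ holds pointwise and the relevant derivatives are of a function that is identically zero — here I would need the hypothesis to hold on an open set, or at least that Theorem~\ref{thm intermediate} has already absorbed these as it did in Theorem~\ref{thm main}); (3) identify the intrinsic curvature of the surface $\Sigma_{n-2}$ with $2\operatorname{K}(\Sigma_{n-2})$, giving $2\Delta|Z_0| \ge (\scal_{\Sigma_0} - 2\operatorname{K}(\Sigma_{n-2}))|Z_0|$ after multiplying through by $|Z_0|>0$.

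For the integrated statement, I would integrate $2\Delta|Z_0|\ge(\scal-2\operatorname{K}(\Sigma_{n-2}))|Z_0|$ over $M$. The left side integrates to zero if $M$ is closed (or one argues with cutoffs / the divergence theorem), so $\int_M 2\operatorname{K}(\Sigma_{n-2})\,|Z_0| \ge \int_M \scal\,|Z_0|$. The point is that $|Z_0|\,dV_M = |du_0\wedge\cdots\wedge du_{n-3}|\,dV_M$ is, by the iterative definition $Z_m|Z_{m+1}|^{-1}=\nabla_{\Sigma_m}u_m$, precisely the measure that the coarea formula for the map $\mathbf u\colon M\to T^{n-2}$ disintegrates into $d\mathbf t$ on the torus times the induced surface measure on the fiber $\{\mathbf u=\mathbf t\}$; I would verify this identification of Jacobians carefully, as it is the one genuinely computational point. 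Then Fubini plus the Gauss--Bonnet theorem $\int_{\{\mathbf u=\mathbf t\}}\operatorname{K} = 2\pi\chi(\{\mathbf u=\mathbf t\})$ on each (generic, smooth, compact) fiber converts $\int_M \operatorname{K}(\Sigma_{n-2})|Z_0|$ into $2\pi\int_{T^{n-2}}\chi(\{\mathbf u=\mathbf t\})\,d\mathbf t$, yielding the claimed inequality $4\pi\int_{T^{n-2}}\chi(\{\mathbf u=\mathbf t\})\ge\int_M\scal\,|du_0\wedge\cdots\wedge du_{n-3}|$. The main obstacle is not any single inequality but keeping the index conventions and the Jacobian/coarea identification consistent with the iterative normalization; the curvature algebra is a routine specialization of Theorem~\ref{thm intermediate} to a $2$-dimensional final leaf.
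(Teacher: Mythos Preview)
Your approach is essentially the same as the paper's: apply Theorem~\ref{thm intermediate} with $s=n-2$, use the normalization to eliminate the Laplacian term on the two-dimensional leaf $\Sigma_{n-2}$, identify the intrinsic curvature there with $2\operatorname{K}(\Sigma_{n-2})$ via the iterated Gauss equation, drop the nonnegative terms, and then integrate using the coarea formula (with $|Z_0|=|du_0\wedge\cdots\wedge du_{n-3}|$) together with Gauss--Bonnet on the fibers. Your suspicion about the index is correct: the ``$|Z_{s-2}|=1$'' in the statement is a typo for $|Z_{n-2}|=1$ (equivalently $|Z_s|=1$ when $s=n-2$), which is precisely the normalization the paper uses to kill the term $-2\Delta_{\Sigma_{n-2}}\log|Z_{n-2}|$; with that correction your outline matches the paper's proof.
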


For $n=3$ this reduces to Stern's identity \cite{Stern2022ScalarCurvatureHarmonicMapsS1},
\[
4\pi\int_{t\in S^1}\chi(\{x:u(x)=t\})
\ge
\int_{T^3}\scal \,|du|.
\]
Once solvability of the associated PDE system has been established in higher
dimensions, this framework may provide a new approach to scalar curvature
problems in all dimensions, including the positive mass theorem and the
Geroch conjecture.
Also see Remark \ref{rem pde systems} for an alternative formulation of the PDE system $\operatorname{div}_{\Sigma_m}Z_m=0$.

We emphasize that the nonvanishing comass condition
$du_0\wedge\cdots\wedge du_{n-3}\neq 0$ is analogous to the assumption
of a nonvanishing gradient in the classical level-set method.
In those settings, the geometric conclusions remain valid even when
the gradient vanishes, although the analysis becomes more delicate;
see for instance \cite{Stern2022ScalarCurvatureHarmonicMapsS1, HKK, HuiskenIlmanen97}.

Finally, for other values of $s$, Theorem~\ref{thm intermediate}
encodes $s$-intermediate curvature in the sense of
\cite{BrendleHirschJohne24}.
Thus, Theorem~\ref{thm intermediate} provides a new analytic approach
to the study of intermediate curvature, complementary to the method
of minimal slicings.

\textbf{Acknowledgements:} This work was supported in part by Grant No. DMS-1926686 of the National Science Foundation, and the School of Mathematics of the Institute of Advanced Study.
The author is grateful to Bernd Ammann, Misha Gromov and Bernhard Hanke for helpful discussions and their interest in this work.

\section{Proof of the Bochner formula}

Throughout this section we work in the setting of Theorem~\ref{thm main}.
Let $(M^n,g)$ be a smooth Riemannian manifold and fix a point $p\in M$.
Let $u_0,\dots,u_{n-2}$ be $C^3$ functions on $M$ such that
\[
du_0\wedge\cdots\wedge du_{n-2}\neq 0
\quad\text{at }p.
\]
For $m\ge 0$ let $\Sigma_m$ denote the joint level set of the map
$(u_0,\dots,u_{m-1})$, with the convention $\Sigma_0=M$.
Write $g_{\Sigma_m}$ for the induced metric on $\Sigma_m$ and
$\nabla^{\Sigma_m}$ for its Levi--Civita connection.
For $0\le m\le n-2$, let $\nu_{m+1}$ be the unit normal of
$\Sigma_{m+1}\subset\Sigma_m$.

Let $Z_0,\dots,Z_{n-1}$ be nonvanishing smooth vector fields defined iteratively by
\[
Z_m|Z_{m+1}|^{-1}=\nabla_{\Sigma_m}u_m,
\qquad 0\le m\le n-2,
\]
and assume that at $p$ we have
\[
\operatorname{div}_{\Sigma_m}Z_m=0
\qquad\text{for all }0\le m\le n-2.
\]

\begin{lemma}\label{Lemma: H}
At $p$ we have
\[
H_{\Sigma_{m+1}}
=
-\nabla^{\Sigma_m}_{\nu_{m+1}}\log|Z_m|
\]
and
\[
|A_{\Sigma_{m+1}}|^2
=
|\nabla_{\Sigma_m}u_m|^{-2}|\nabla^2_{\Sigma_m}u_m|^2
-2|\nabla_{\Sigma_m}\log|\nabla_{\Sigma_m}u_m||^2
+\bigl(\nabla_{\nu_{m+1}}^{\Sigma_m}\log|\nabla_{\Sigma_m}u_m|\bigr)^2.
\]
\end{lemma}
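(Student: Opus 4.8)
The plan is to compute the second fundamental form $A_{\Sigma_{m+1}}$ of the hypersurface $\Sigma_{m+1}\subset\Sigma_m$ directly from the defining equation $\nu_{m+1} = \nabla_{\Sigma_m} u_m / |\nabla_{\Sigma_m} u_m|$, and then extract the mean curvature as its trace. Write $f = u_m$, $N = \Sigma_m$, $\varphi = |\nabla_N f|$ for brevity, so that $\nu := \nu_{m+1} = \varphi^{-1}\nabla_N f$. The standard formula for the second fundamental form of a regular level set is
\begin{equation*}
A_{\Sigma_{m+1}}(X,Y) = \frac{1}{\varphi}\,\nabla^2_N f(X,Y), \qquad X,Y \in T\Sigma_{m+1},
\end{equation*}
which follows from $A(X,Y) = \langle \nabla^N_X \nu, Y\rangle$ together with $\langle \nabla_N f, Y\rangle = 0$ on $\Sigma_{m+1}$ and the product rule applied to $\nu = \varphi^{-1}\nabla_N f$ (the $X(\varphi^{-1})$ term drops since it multiplies $\langle\nabla_N f,Y\rangle = 0$).

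For the mean curvature, I would first recall the general identity $\operatorname{div}_N(\nabla_N f/\varphi) = \Delta_N f/\varphi - \varphi^{-2}\nabla^2_N f(\nabla_N f,\nabla_N f) = -H_{\Sigma_{m+1}} + (\nabla^N_{\nu}\log\varphi)$ — that is, taking the full divergence of the unit normal field on $N$ splits into the mean curvature of the level set plus the normal derivative term. But the hypothesis is precisely that $\operatorname{div}_{\Sigma_m} Z_m = 0$ at $p$, and since $Z_m = \varphi \nu$ with $\varphi = |\nabla_{\Sigma_m} u_m| = |Z_m|$ (using $|Z_{m+1}|=1$ is not needed here; the relation $Z_m|Z_{m+1}|^{-1} = \nabla_{\Sigma_m}u_m$ gives $|Z_m| = |Z_{m+1}|\,\varphi$, so one must be a little careful — actually $|\nabla_{\Sigma_m} u_m| = |Z_m|/|Z_{m+1}|$, and $\nu_{m+1} = Z_m/|Z_m|$). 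Expanding $0 = \operatorname{div}_{\Sigma_m}(|Z_m|\,\nu_{m+1}) = |Z_m|\operatorname{div}_{\Sigma_m}\nu_{m+1} + \nabla^{\Sigma_m}_{\nu_{m+1}}|Z_m|$ and using $\operatorname{div}_{\Sigma_m}\nu_{m+1} = -H_{\Sigma_{m+1}}$ (sign convention: $H$ with respect to $\nu_{m+1}$, and the normal direction contributes $\langle\nabla^{\Sigma_m}_{\nu}\nu,\nu\rangle=0$), one solves for $H_{\Sigma_{m+1}} = -\nabla^{\Sigma_m}_{\nu_{m+1}}\log|Z_m|$, which is the first claim.

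For the norm of $A$, the plan is to decompose $|\nabla^2_N f|^2$ over an orthonormal frame adapted to the splitting $TN = T\Sigma_{m+1}\oplus\mathbb{R}\nu$. Split $|\nabla^2_N f|^2 = |\nabla^2_N f|_{TT}|^2 + 2|\nabla^2_N f(\cdot,\nu)|_T|^2 + (\nabla^2_N f(\nu,\nu))^2$, where the tangential-tangential block is $\varphi^2 |A_{\Sigma_{m+1}}|^2$ by the formula above. Now express the mixed and normal-normal Hessian components in terms of derivatives of $\varphi$: differentiating $\varphi^2 = \langle\nabla_N f,\nabla_N f\rangle$ gives $\varphi\,\nabla_X\varphi = \nabla^2_N f(X,\nabla_N f) = \varphi\,\nabla^2_N f(X,\nu)$, hence $\nabla^2_N f(X,\nu) = \nabla_X\varphi$ for all $X$; in particular $\nabla^2_N f(\nu,\nu) = \nabla_\nu\varphi = \varphi\,\nabla^{\Sigma_m}_{\nu_{m+1}}\log\varphi$, and the tangential mixed part has squared norm $|\nabla_N\varphi|^2 - (\nabla_\nu\varphi)^2 = \varphi^2(|\nabla_{\Sigma_m}\log\varphi|^2 - (\nabla^{\Sigma_m}_\nu\log\varphi)^2)$ once one notes $|\nabla_N\log\varphi|^2 = |\nabla_{\Sigma_m}\log\varphi|^2$ (the full $N$-gradient, decomposed). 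Substituting and solving for $|A_{\Sigma_{m+1}}|^2$:
\begin{equation*}
|A_{\Sigma_{m+1}}|^2 = \varphi^{-2}|\nabla^2_N f|^2 - 2(|\nabla_{\Sigma_m}\log\varphi|^2 - (\nabla^{\Sigma_m}_\nu\log\varphi)^2) - (\nabla^{\Sigma_m}_\nu\log\varphi)^2,
\end{equation*}
which after recombining the $(\nabla^{\Sigma_m}_\nu\log\varphi)^2$ terms is exactly the stated identity with $\varphi = |\nabla_{\Sigma_m}u_m|$.

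I do not expect a serious obstacle — this is a self-contained computation — but the two points requiring care are: keeping the sign and trace conventions for $H_{\Sigma_{m+1}}$ and $A_{\Sigma_{m+1}}$ consistent (in particular that the divergence of the unit normal over $N$ picks up $-H$ plus the normal-direction term, which vanishes since $\nu$ is unit), and correctly bookkeeping which gradient/Hessian is taken on $\Sigma_m$ versus restricted to $\Sigma_{m+1}$ when decomposing $|\nabla^2_{\Sigma_m}u_m|^2$. The identity $\nabla^2_N f(X,\nu) = \nabla_X|\nabla_N f|$ is the linchpin that converts all off-diagonal Hessian data into derivatives of $|\nabla_{\Sigma_m}u_m|$, and everything else is linear algebra on the $3$-block decomposition.
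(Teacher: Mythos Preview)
Your approach is correct and essentially the same as the paper's: both identities rest on the level-set formula $A_{\Sigma_{m+1}}=|\nabla_{\Sigma_m}u_m|^{-1}\nabla^2_{\Sigma_m}u_m$ together with the divergence-free hypothesis $\operatorname{div}_{\Sigma_m}Z_m=0$. The paper dispatches the $|A|^2$ identity in one line by citing that formula as ``well-known,'' whereas you spell out the tangential/normal block decomposition of $|\nabla^2_{\Sigma_m}u_m|^2$ via the key identity $\nabla^2_{\Sigma_m}u_m(X,\nu_{m+1})=\nabla_X|\nabla_{\Sigma_m}u_m|$; this is exactly the computation that underlies the paper's one-liner. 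For the mean curvature, your route (write $Z_m=|Z_m|\nu_{m+1}$ and apply the product rule to $\operatorname{div}_{\Sigma_m}Z_m=0$) is slightly more direct than the paper's, which instead expands $Z_m=|Z_{m+1}|\nabla_{\Sigma_m}u_m$ and passes through $\Delta_{\Sigma_m}u_m$.

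One sign slip to fix: with your stated convention $A(X,Y)=\langle\nabla^N_X\nu,Y\rangle$, the trace gives $\operatorname{div}_{\Sigma_m}\nu_{m+1}=+H_{\Sigma_{m+1}}$, not $-H_{\Sigma_{m+1}}$. Plugging this into $0=|Z_m|\operatorname{div}_{\Sigma_m}\nu_{m+1}+\nabla^{\Sigma_m}_{\nu_{m+1}}|Z_m|$ then yields the correct conclusion $H_{\Sigma_{m+1}}=-\nabla^{\Sigma_m}_{\nu_{m+1}}\log|Z_m|$. As written, your two sign choices are inconsistent (and happen to cancel); you already flagged sign conventions as the main hazard, so just align them.
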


\begin{proof}
Since, $     \operatorname{div}_{\Sigma_m}Z_m=0$ we obtain 
\begin{align*}
-|Z_{m+1}|^{-2}\langle Z_m, \nabla_{\Sigma_m}|Z_{m+1}| \rangle=  \operatorname{div}_{\Sigma_m}(Z_m|Z_{m+1}|^{-1})=\Delta_{\Sigma_m}u_m.
\end{align*}
Consequently,
\begin{align*}
    -\nabla^{\Sigma_m}_{\nu_{m+1}}\log|Z_m|
    =&-|Z_m|^{-2}\langle \nabla^{\Sigma_m}_{\nu_{m+1}}Z_m,Z_m\rangle\\
    =&-|Z_m|^{-2}\langle \nabla^{\Sigma_m}_{\nu_{m+1}}(|Z_{m+1}|\nabla_{\Sigma_m}u_m),Z_m\rangle\\
    =&|Z_m|^{-1}|Z_{m+1}|(\Delta_{\Sigma_m}-\nabla^{\Sigma_m}_{\nu_{m+1}\nu_{m+1}})u_m\\
    =& H_{\Sigma_{m+1}}
\end{align*}
where we used $Z_{m}|Z_{m+1}|^{-1}=\nabla_{\Sigma_m}u_m$.
The second identity follows from the well-known equation $A_{\Sigma_{m+1}}=|\nabla^{\Sigma_m}u_m|^{-1}\nabla_{\Sigma_m}^2u_m$.
\end{proof}

\begin{lemma}\label{Lemma: A-H identity}
We have at $p$
    \begin{align*}
        &|A_{\Sigma_{m+1}}|^2-H_{\Sigma_{m+1}}^2           \\
        =&|\nabla_{\Sigma_m}\nu_m|^2+|\nabla_{\Sigma_{m+1}}\log|Z_{m+1}||^2
        +2\langle\nabla_{\Sigma_{m+1}}|Z_{m+1}|,\nabla_{\Sigma_{m+1}}\log|\nabla_{\Sigma_m}u_m|\rangle
        -|\nabla_{\Sigma_m}\log|Z_m||^2
    \end{align*}
\end{lemma}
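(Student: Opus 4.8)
The plan is to reduce the whole identity to elementary manipulations of the Hessian $\nabla^2_{\Sigma_m}u_m$ and of the relevant unit normal $\nu_{m+1}=Z_m|Z_m|^{-1}=\nabla_{\Sigma_m}u_m\,|\nabla_{\Sigma_m}u_m|^{-1}$, using two structural facts throughout: the orthogonal splitting of every vector and every gradient on $\Sigma_m$ into its $T\Sigma_{m+1}$-part and its $\nu_{m+1}$-component, and the factorization $|Z_m|=|Z_{m+1}|\,|\nabla_{\Sigma_m}u_m|$, equivalently $\log|Z_m|=\log|Z_{m+1}|+\log|\nabla_{\Sigma_m}u_m|$.

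The only step that is not purely algebraic is the ``normalized gradient'' identity
\[
|\nabla_{\Sigma_m}\nu_{m+1}|^2
=|\nabla_{\Sigma_m}u_m|^{-2}\,|\nabla^2_{\Sigma_m}u_m|^2
-\bigl|\nabla_{\Sigma_m}\log|\nabla_{\Sigma_m}u_m|\bigr|^2 .
\]
To prove it, differentiate $\nu_{m+1}=\nabla_{\Sigma_m}u_m/|\nabla_{\Sigma_m}u_m|$ along $X\in T\Sigma_m$, which gives $\nabla^{\Sigma_m}_X\nu_{m+1}=|\nabla_{\Sigma_m}u_m|^{-1}(\nabla^2_{\Sigma_m}u_m)(X,\cdot)^\sharp-(\nabla^{\Sigma_m}_X\log|\nabla_{\Sigma_m}u_m|)\,\nu_{m+1}$; since $(\nabla^2_{\Sigma_m}u_m)(X,\nu_{m+1})=|\nabla_{\Sigma_m}u_m|\,\nabla^{\Sigma_m}_X\log|\nabla_{\Sigma_m}u_m|$, squaring and summing over an orthonormal frame of $\Sigma_m$ produces exactly the asserted cancellation.

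Next I would substitute this identity into the formula for $|A_{\Sigma_{m+1}}|^2$ from Lemma~\ref{Lemma: H} and use $\bigl|\nabla_{\Sigma_m}\log|\nabla_{\Sigma_m}u_m|\bigr|^2=\bigl|\nabla_{\Sigma_{m+1}}\log|\nabla_{\Sigma_m}u_m|\bigr|^2+\bigl(\nabla^{\Sigma_m}_{\nu_{m+1}}\log|\nabla_{\Sigma_m}u_m|\bigr)^2$; the $\nu_{m+1}$-direction terms collapse and one is left with the compact formula
\[
|A_{\Sigma_{m+1}}|^2=|\nabla_{\Sigma_m}\nu_{m+1}|^2-\bigl|\nabla_{\Sigma_{m+1}}\log|\nabla_{\Sigma_m}u_m|\bigr|^2 .
\]
Combined with $H_{\Sigma_{m+1}}^2=\bigl(\nabla^{\Sigma_m}_{\nu_{m+1}}\log|Z_m|\bigr)^2$, again from Lemma~\ref{Lemma: H}, this expresses $|A_{\Sigma_{m+1}}|^2-H_{\Sigma_{m+1}}^2$ through $|\nabla_{\Sigma_m}\nu_{m+1}|^2$, one tangential term, and one normal-direction term.

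It then remains to reconcile this with the claimed right-hand side, and here the main (and essentially only) obstacle is bookkeeping. I would expand $|\nabla_{\Sigma_m}\log|Z_m||^2$ via $\log|Z_m|=\log|Z_{m+1}|+\log|\nabla_{\Sigma_m}u_m|$ and decompose each of the three resulting gradients into its $T\Sigma_{m+1}$-tangential and $\nu_{m+1}$-components. After expansion, the tangential cross term cancels against $2\langle\nabla_{\Sigma_{m+1}}\log|Z_{m+1}|,\nabla_{\Sigma_{m+1}}\log|\nabla_{\Sigma_m}u_m|\rangle$, while the three $\nu_{m+1}$-direction contributions recombine into the single square $-\bigl(\nabla^{\Sigma_m}_{\nu_{m+1}}(\log|Z_{m+1}|+\log|\nabla_{\Sigma_m}u_m|)\bigr)^2=-\bigl(\nabla^{\Sigma_m}_{\nu_{m+1}}\log|Z_m|\bigr)^2$, which is precisely the term coming from $H_{\Sigma_{m+1}}^2$; the leftover tangential square $-\bigl|\nabla_{\Sigma_{m+1}}\log|\nabla_{\Sigma_m}u_m|\bigr|^2$ matches the one coming from $|A_{\Sigma_{m+1}}|^2$. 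One must only be careful to keep the three gradient operators $\nabla_{\Sigma_m}$, $\nabla_{\Sigma_{m+1}}$, and $\nabla^{\Sigma_m}_{\nu_{m+1}}$ apart, and to note that the divergence-free hypothesis enters nowhere in this argument except implicitly, through the mean-curvature identity of Lemma~\ref{Lemma: H}.
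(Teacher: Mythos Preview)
Your argument is correct and follows essentially the same route as the paper: both proofs rest on the formula for $|A_{\Sigma_{m+1}}|^2$ from Lemma~\ref{Lemma: H}, the ``normalized gradient'' identity (the paper applies it to $Z_m$ rather than to $\nabla_{\Sigma_m}u_m$, writing $|\nabla_{\Sigma_m}Z_m|^2|Z_m|^{-2}=|\nabla_{\Sigma_m}\nu_{m+1}|^2+|\nabla_{\Sigma_m}\log|Z_m||^2$), and the factorization $\log|Z_m|=\log|Z_{m+1}|+\log|\nabla_{\Sigma_m}u_m|$ together with the tangential/normal splitting. Your intermediate compact formula $|A_{\Sigma_{m+1}}|^2=|\nabla_{\Sigma_m}\nu_{m+1}|^2-|\nabla_{\Sigma_{m+1}}\log|\nabla_{\Sigma_m}u_m||^2$ organizes the cancellations slightly more cleanly than the paper's expansion, but the content is the same.
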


\begin{proof}
Using $Z_{m}|Z_{m+1}|^{-1}=\nabla_{\Sigma_m}u_m$, we compute at $p$
\begin{align*}
   &|\nabla_{\Sigma_m}u_m|^{-2}|\nabla^2_{\Sigma_m}u_m|^2\\
   =&|\nabla_{\Sigma_m}Z_m|^2|Z_m|^{-2}+|\nabla_{\Sigma_m}\log|Z_{m+1}||^2-2\langle\nabla_{\Sigma_m}\log|Z_{m+1}|,\nabla_{\Sigma_m}\log |Z_m|\rangle
\end{align*}
and
\begin{align*}
    &-2|\nabla_{\Sigma_m}\log|\nabla_{\Sigma_m}u_m||^2\\
    =&-2|\nabla_{\Sigma_m}\log|Z_m||^2+4\langle \nabla_{\Sigma_m}\log|Z_m|,\nabla_{\Sigma_m}\log|Z_{m+1}|\rangle-2|\nabla_{\Sigma_m}\log|Z_{m+1}||^2
\end{align*}
Combining this with $|\nabla_{\Sigma_m}Z_m|^2|Z_m|^{-2}=|\nabla_{\Sigma_m}\nu_m|^2+|\nabla_{\Sigma_m}\log |Z_m||^2$ and Lemma \ref{Lemma: H} yields
        \begin{align*}
        &|A_{\Sigma_{m+1}}|^2\\
              =&|\nabla_{\Sigma_m}\nu_m|^2-|\nabla_{\Sigma_m}\log|Z_{m+1}||^2-|\nabla_{\Sigma_m}\log|Z_m||^2\\
        &+2\langle \nabla_{\Sigma_m}\log|Z_m|,\nabla_{\Sigma_m}\log|Z_{m+1}|\rangle+(\nabla_{\nu_{m+1}}^{\Sigma_m}\log|\nabla_{\Sigma_m}u_m|)^2\\
            =&|\nabla_{\Sigma_m}\nu_m|^2+|\nabla_{\Sigma_m}\log|Z_{m+1}||^2-|\nabla_{\Sigma_m}\log|Z_m||^2\\&+2\langle \nabla_{\Sigma_m}\log|\nabla_{\Sigma_m}u_m|,\nabla_{\Sigma_m}\log|Z_{m+1}|\rangle+(\nabla_{\nu_{m+1}}^{\Sigma_m}\log|\nabla_{\Sigma_m}u_m|)^2
     \end{align*}
     where we used again $Z_{m}|Z_{m+1}|^{-1}=\nabla_{\Sigma_m}u_m$ in the last equation.
    Moreover,
    \begin{align*}
        H_{\Sigma_{m+1}}^2=&|\nabla^{\Sigma_m}_{\nu_{m+1}}\log|Z_m||^2\\
        =&|\nabla^{\Sigma_m}_{\nu_{m+1}}\log|Z_{m+1}||^2+|\nabla^{\Sigma_m}_{\nu_{m+1}}\log|\nabla_{\Sigma_m}u_m||^2+2\nabla^{\Sigma_m}_{\nu_{m+1}}\log|Z_{m+1}|\nabla^{\Sigma_m}_{\nu_{m+1}}\log|\nabla_{\Sigma_m}u_m|
    \end{align*}
    Combining terms, the result follows.
\end{proof}

\begin{lemma}\label{Lemma:main formula}
  We have at $p$
\begin{align*}
      &\Delta_{\Sigma_m}\log|Z_m|-\Delta_{\Sigma_{m+1}}\log|Z_{m+1}|\\
        =&\Ric_{\Sigma_m}(\nu_{m+1},\nu_{m+1})+|A_{\Sigma_{m+1}}|^2-H_{\Sigma_{m+1}}^2\\
        &+|Z_m|^{-1}\nabla^{\Sigma_m}_{\nu_{m+1}}(\operatorname{div}_{\Sigma_m}Z_m)-\langle \nabla_{\Sigma_{m+1}}\log|\nabla_{\Sigma_m}u_m|,\nabla_{\Sigma_{m+1}}\log|Z_{m+1}|\rangle
        \end{align*}
\end{lemma}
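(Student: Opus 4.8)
The plan is to derive the identity from the classical Bochner formula on $\Sigma_m$ applied to $u_m$, after normalizing by $|\nabla_{\Sigma_m}u_m|^{2}$, and then to split every ambient $\Sigma_m$-Laplacian and $\Sigma_m$-gradient occurring on the right-hand side into its $\Sigma_{m+1}$-tangential and $\nu_{m+1}$-normal parts. The starting point is that $|\nabla_{\Sigma_m}u_m|=|Z_m|\,|Z_{m+1}|^{-1}$, hence $\log|\nabla_{\Sigma_m}u_m|=\log|Z_m|-\log|Z_{m+1}|$, so the left-hand side of the lemma equals $\Delta_{\Sigma_m}\log|\nabla_{\Sigma_m}u_m|+\bigl(\Delta_{\Sigma_m}\log|Z_{m+1}|-\Delta_{\Sigma_{m+1}}\log|Z_{m+1}|\bigr)$, and the parenthesis is rewritten via the standard hypersurface formula $\Delta_{\Sigma_m}f=\Delta_{\Sigma_{m+1}}f+H_{\Sigma_{m+1}}\nabla^{\Sigma_m}_{\nu_{m+1}}f+\nabla^2_{\Sigma_m}f(\nu_{m+1},\nu_{m+1})$ with $f=\log|Z_{m+1}|$. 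It thus suffices to expand $\Delta_{\Sigma_m}\log|\nabla_{\Sigma_m}u_m|$.

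For that I would apply Bochner on $\Sigma_m$,
\[
\tfrac12\Delta_{\Sigma_m}|\nabla_{\Sigma_m}u_m|^2=|\nabla^2_{\Sigma_m}u_m|^2+\langle\nabla_{\Sigma_m}u_m,\nabla_{\Sigma_m}\Delta_{\Sigma_m}u_m\rangle+\Ric_{\Sigma_m}(\nabla_{\Sigma_m}u_m,\nabla_{\Sigma_m}u_m),
\]
divide by $|\nabla_{\Sigma_m}u_m|^2$, and use $\tfrac12|\nabla_{\Sigma_m}u_m|^{-2}\Delta_{\Sigma_m}|\nabla_{\Sigma_m}u_m|^2=\Delta_{\Sigma_m}\log|\nabla_{\Sigma_m}u_m|+2|\nabla_{\Sigma_m}\log|\nabla_{\Sigma_m}u_m||^2$ on the left. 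The Hessian term is treated with the second identity of Lemma~\ref{Lemma: H}, which upon division gives $|\nabla_{\Sigma_m}u_m|^{-2}|\nabla^2_{\Sigma_m}u_m|^2=|A_{\Sigma_{m+1}}|^2+2|\nabla_{\Sigma_m}\log|\nabla_{\Sigma_m}u_m||^2-\bigl(\nabla^{\Sigma_m}_{\nu_{m+1}}\log|\nabla_{\Sigma_m}u_m|\bigr)^2$, so the quadratic gradient terms cancel and $|A_{\Sigma_{m+1}}|^2-\bigl(\nabla^{\Sigma_m}_{\nu_{m+1}}\log|\nabla_{\Sigma_m}u_m|\bigr)^2$ survives; the Ricci term gives $\Ric_{\Sigma_m}(\nu_{m+1},\nu_{m+1})$ after division. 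For $\langle\nabla_{\Sigma_m}u_m,\nabla_{\Sigma_m}\Delta_{\Sigma_m}u_m\rangle$ I would repeat the computation in the proof of Lemma~\ref{Lemma: H} \emph{without} discarding the divergence, obtaining $\Delta_{\Sigma_m}u_m=|Z_{m+1}|^{-1}\operatorname{div}_{\Sigma_m}Z_m-\langle\nabla_{\Sigma_m}u_m,\nabla_{\Sigma_m}\log|Z_{m+1}|\rangle$ in a neighborhood of $p$; differentiating, contracting with $\nabla_{\Sigma_m}u_m$, using $\nabla_{\nabla_{\Sigma_m}u_m}\nabla_{\Sigma_m}u_m=\tfrac12\nabla_{\Sigma_m}|\nabla_{\Sigma_m}u_m|^2$, and finally inserting $\operatorname{div}_{\Sigma_m}Z_m=0$ at $p$, the first term yields $|Z_m|^{-1}\nabla^{\Sigma_m}_{\nu_{m+1}}(\operatorname{div}_{\Sigma_m}Z_m)$ after division (since $|Z_{m+1}|^{-1}|\nabla_{\Sigma_m}u_m|^{-1}=|Z_m|^{-1}$), and the second yields $-\langle\nabla_{\Sigma_m}\log|\nabla_{\Sigma_m}u_m|,\nabla_{\Sigma_m}\log|Z_{m+1}|\rangle-\nabla^2_{\Sigma_m}\log|Z_{m+1}|(\nu_{m+1},\nu_{m+1})$.

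It remains to assemble the pieces. The two copies of $\nabla^2_{\Sigma_m}\log|Z_{m+1}|(\nu_{m+1},\nu_{m+1})$—one from the hypersurface decomposition of $\Delta_{\Sigma_m}\log|Z_{m+1}|$, one from the previous step—cancel. Splitting $\langle\nabla_{\Sigma_m}\log|\nabla_{\Sigma_m}u_m|,\nabla_{\Sigma_m}\log|Z_{m+1}|\rangle$ into its $\Sigma_{m+1}$-tangential part (the cross term in the statement) plus the product of the two $\nu_{m+1}$-derivatives, and using the first identity of Lemma~\ref{Lemma: H} in the form $\nabla^{\Sigma_m}_{\nu_{m+1}}\log|\nabla_{\Sigma_m}u_m|=-H_{\Sigma_{m+1}}-\nabla^{\Sigma_m}_{\nu_{m+1}}\log|Z_{m+1}|$, all remaining $\nu_{m+1}$-directional terms—namely $H_{\Sigma_{m+1}}\nabla^{\Sigma_m}_{\nu_{m+1}}\log|Z_{m+1}|$, $-\bigl(\nabla^{\Sigma_m}_{\nu_{m+1}}\log|\nabla_{\Sigma_m}u_m|\bigr)^2$, and $-\nabla^{\Sigma_m}_{\nu_{m+1}}\log|\nabla_{\Sigma_m}u_m|\,\nabla^{\Sigma_m}_{\nu_{m+1}}\log|Z_{m+1}|$—collapse to $-H_{\Sigma_{m+1}}^2$, and the claimed identity follows. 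I expect the main obstacle to be exactly this bookkeeping: one must differentiate the divergence identity for $\Delta_{\Sigma_m}u_m$ (not merely use its vanishing at $p$) to produce the term $|Z_m|^{-1}\nabla^{\Sigma_m}_{\nu_{m+1}}(\operatorname{div}_{\Sigma_m}Z_m)$, and then check that the several $\nu_{m+1}$-directional quantities indeed combine into the single $-H_{\Sigma_{m+1}}^2$ while the surviving tangential piece matches the stated cross term; the rest is a routine combination of Bochner's formula, Lemma~\ref{Lemma: H}, and the level-set splitting of the Laplacian.
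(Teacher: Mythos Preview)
Your proposal is correct and follows essentially the same route as the paper: split $\log|Z_m|=\log|\nabla_{\Sigma_m}u_m|+\log|Z_{m+1}|$, use the hypersurface decomposition $\Delta_{\Sigma_m}=\Delta_{\Sigma_{m+1}}+H_{\Sigma_{m+1}}\nabla^{\Sigma_m}_{\nu_{m+1}}+\nabla^{\Sigma_m}_{\nu_{m+1}\nu_{m+1}}$, apply Bochner to $u_m$ on $\Sigma_m$, invoke Lemma~\ref{Lemma: H} for the Hessian and mean-curvature identifications, and differentiate the divergence identity for $\Delta_{\Sigma_m}u_m$ to extract the $|Z_m|^{-1}\nabla^{\Sigma_m}_{\nu_{m+1}}(\operatorname{div}_{\Sigma_m}Z_m)$ term. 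The only cosmetic difference is that the paper organizes the second-order cancellation around the iterated directional derivative $\nabla_{\nu_{m+1}}^{\Sigma_m}(\nabla^{\Sigma_m}_{\nu_{m+1}}\log|Z_{m+1}|)$ rather than the Hessian $\nabla^2_{\Sigma_m}\log|Z_{m+1}|(\nu_{m+1},\nu_{m+1})$, but this is the same computation written in a different order; your verification that the three $\nu_{m+1}$-directional pieces collapse to $-H_{\Sigma_{m+1}}^2$ via $a+b=-H$ is exactly the bookkeeping the paper does in its final paragraph.
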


\begin{proof}
Using $Z_{m}|Z_{m+1}|^{-1}=\nabla_{\Sigma_m}u_m$, we obtain
\begin{align*}
      &\Delta_{\Sigma_m}\log|Z_m|-\Delta_{\Sigma_{m+1}}\log|Z_{m+1}|\\
        =&\Delta_{\Sigma_m}\log|Z_{m+1}|+\Delta_{\Sigma_m}\log|\nabla_{\Sigma_m}u_m|-(\Delta_{\Sigma_m}-\nabla^{\Sigma_m}_{\nu_{m+1}\nu_{m+1}}-H_{\Sigma_{m+1}}\nabla^{\Sigma_m}_{\nu_{m+1}})\log|Z_{m+1}|\\
            =&\Delta_{\Sigma_m}\log|\nabla_{\Sigma_m}u_m|+(\nabla^{\Sigma_m}_{\nu_{m+1}\nu_{m+1}}+H_{\Sigma_{m+1}}\nabla^{\Sigma_m}_{\nu_{m+1}})\log|Z_{m+1}|.
\end{align*}
Bochner's formula yields
\begin{align*}
    \Delta_{\Sigma_m}\log|\nabla_{\Sigma_m}u_m|=&\Ric_{\Sigma_m}(\nu_{m+1},\nu_{m+1})+|\nabla^2_{\Sigma_m}u_m|^2|\nabla_{\Sigma_m}u_m|^{-2}-2|\nabla_{\Sigma_m}\log|\nabla_{\Sigma_m}u_m||^2\\
        &+|\nabla_{\Sigma_m}u_m|^{-1}\nabla^{\Sigma_m}_{\nu_{m+1}}\Delta_{\Sigma_m}u_m.
\end{align*}
Moreover,
\begin{align*}
    &|\nabla_{\Sigma_m}u_m|^{-1}\nabla^{\Sigma_m}_{\nu_{m+1}}\Delta_{\Sigma_m}u_m\\=&
     |\nabla_{\Sigma_m}u_m|^{-1}\nabla^{\Sigma_m}_{\nu_{m+1}}(\operatorname{div}_{\Sigma_m}(Z_m|Z_{m+1}|^{-1}))\\
     =&  |Z_m|^{-1}\nabla^{\Sigma_m}_{\nu_{m+1}}(\operatorname{div}_{\Sigma_m}Z_m)+|\nabla_{\Sigma_m}u_m|^{-1}\nabla^{\Sigma_m}_{\nu_{m+1}}(|Z_m|\nabla^{\Sigma_m}_{\nu_{m+1}}|Z_{m+1}|^{-1})\\
     =&|Z_m|^{-1}\nabla^{\Sigma_m}_{\nu_{m+1}}(\operatorname{div}_{\Sigma_m}Z_m)-
    |\nabla_{\Sigma_m}u_m|^{-1} \nabla_{\nu_{m+1}}^{\Sigma_m}(|\nabla_{\Sigma_m}u_m|\nabla^{\Sigma_m}_{\nu_{m+1}}\log|Z_{m+1}|)
\end{align*}
Furthermore, recall from Lemma \ref{Lemma: H}
\begin{align*}
    |A_{\Sigma_{m+1}}|^2
        =&|\nabla_{\Sigma_m}u_m|^{-2}|\nabla^2_{\Sigma_m}u_m|^2-2|\nabla_{\Sigma_m}\log|\nabla_{\Sigma_m}u_m||^2+(\nabla_{\nu_{m+1}}^{\Sigma_m}\log|\nabla_{\Sigma_m}u_m|)^2
\end{align*}
Consequently,
\begin{align*}
      \Delta_{\Sigma_m}\log|\nabla_{\Sigma_m}u_m|
      =&\Ric_{\Sigma_m}(\nu_{m+1},\nu_{m+1})+ |A_{\Sigma_{m+1}}|^2-(\nabla_{\nu_{m+1}}^{\Sigma_m}\log|\nabla_{\Sigma_m}u_m|)^2\\
        &+|Z_m|^{-1}\nabla^{\Sigma_m}_{\nu_{m+1}}(\operatorname{div}_{\Sigma_m}Z_m)-
    |\nabla_{\Sigma_m}u_m|^{-1} \nabla_{\nu_{m+1}}^{\Sigma_m}(|\nabla_{\Sigma_m}u_m|\nabla^{\Sigma_m}_{\nu_{m+1}}\log|Z_{m+1}|)
\end{align*}
We also have
\begin{align*}
   \nabla^{\Sigma_m}_{\nu_{m+1}\nu_{m+1}}\log|Z_{m+1}|=\nabla_{\nu_{m+1}}^{\Sigma_m}(\nabla^{\Sigma_m}_{\nu_{m+1}}\log|Z_{m+1}|)-\langle \nabla_{\Sigma_{m+1}}\log|\nabla_{\Sigma_m}u_m|,\nabla_{\Sigma_{m+1}}\log|Z_{m+1}|\rangle.
\end{align*}
Moreover, by Lemma \ref{Lemma: H} and $Z_{m}|Z_{m+1}|^{-1}=\nabla_{\Sigma_m}u_m$ 
\begin{align*}
    H_{\Sigma_{m+1}}\nabla^{\Sigma_m}_{\nu_{m+1}}\log|Z_{m+1}|=&-H_{\Sigma_{m+1}}^2+\nabla^{\Sigma_m}_{\nu_{m+1}}\log|Z_{m}|\nabla^{\Sigma_m}_{\nu_{m+1}}\log|\nabla^{\Sigma_m}u_m|
\end{align*}
Combining everything yields
\begin{align*}
      &\Delta_{\Sigma_m}\log|Z_m|-\Delta_{\Sigma_{m+1}}\log|Z_{m+1}|\\
        =&\Ric_{\Sigma_m}(\nu_{m+1},\nu_{m+1})+|A_{\Sigma_{m+1}}|^2-(\nabla_{\nu_{m+1}}^{\Sigma_m}\log|\nabla_{\Sigma_m}u_m|)^2\\
        &+|Z_m|^{-1}\nabla^{\Sigma_m}_{\nu_{m+1}}(\operatorname{div}_{\Sigma_m}Z_m)-
    |\nabla_{\Sigma_m}u_m|^{-1} \nabla_{\nu_{m+1}}^{\Sigma_m}(|\nabla_{\Sigma_m}u_m|\nabla^{\Sigma_m}_{\nu_{m+1}}\log|Z_{m+1}|)
        \\&+\nabla_{\nu_{m+1}}^{\Sigma_m}(\nabla^{\Sigma_m}_{\nu_{m+1}}\log|Z_{m+1}|)-\langle \nabla_{\Sigma_{m+1}}\log|\nabla_{\Sigma_m}u_m|,\nabla_{\Sigma_{m+1}}\log|Z_{m+1}|\rangle
        \\&-H_{\Sigma_{m+1}}^2+\nabla^{\Sigma_m}_{\nu_{m+1}}\log|Z_{m}|\nabla^{\Sigma_m}_{\nu_{m+1}}\log|\nabla^{\Sigma_m}u_m|.
        \end{align*}
Exploiting that the two $\nabla_{\nu_{m+1}}^{\Sigma_m}(\nabla^{\Sigma_m}_{\nu_{m+1}}\log|Z_{m+1}|)$ terms cancel and using once more that $|Z_m|=|Z_{m+1}||\nabla_{\Sigma_m}u_m|$, the result follows.
\end{proof}

\begin{corollary}\label{cor:main}
    We have at $p$
    \begin{align*}
      &\Delta_{\Sigma_m}\log|Z_m|-\Delta_{\Sigma_{m+1}}\log|Z_{m+1}|\\
        =&\Ric_{\Sigma_m}(\nu_{m+1},\nu_{m+1})+\frac12|A_{\Sigma_{m+1}}|^2-\frac12H_{\Sigma_{m+1}}^2\\
        &+|Z_m|^{-1}\nabla^{\Sigma_m}_{\nu_{m+1}}(\operatorname{div}_{\Sigma_m}Z_m)+\frac12|\nabla_{\Sigma_m}\nu_m|^2+\frac12|\nabla_{\Sigma_{m+1}}\log|Z_{m+1}||^2
               -\frac12|\nabla_{\Sigma_m}\log|Z_m||^2
        \end{align*}
\end{corollary}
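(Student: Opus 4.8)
The plan is to deduce Corollary~\ref{cor:main} from Lemma~\ref{Lemma:main formula} by a purely algebraic rearrangement of the term $|A_{\Sigma_{m+1}}|^2-H_{\Sigma_{m+1}}^2$, using the second-fundamental-form identity recorded in Lemma~\ref{Lemma: A-H identity}. Starting from the expression for $\Delta_{\Sigma_m}\log|Z_m|-\Delta_{\Sigma_{m+1}}\log|Z_{m+1}|$ given by Lemma~\ref{Lemma:main formula}, I would split
\[
|A_{\Sigma_{m+1}}|^2-H_{\Sigma_{m+1}}^2
=\tfrac12\bigl(|A_{\Sigma_{m+1}}|^2-H_{\Sigma_{m+1}}^2\bigr)
+\tfrac12\bigl(|A_{\Sigma_{m+1}}|^2-H_{\Sigma_{m+1}}^2\bigr),
\]
leave the first half untouched, and expand the second half by Lemma~\ref{Lemma: A-H identity}.

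Carrying out the substitution, the second half contributes $\tfrac12|\nabla_{\Sigma_m}\nu_m|^2+\tfrac12|\nabla_{\Sigma_{m+1}}\log|Z_{m+1}||^2-\tfrac12|\nabla_{\Sigma_m}\log|Z_m||^2$ together with a cross term equal to $\langle\nabla_{\Sigma_{m+1}}\log|\nabla_{\Sigma_m}u_m|,\nabla_{\Sigma_{m+1}}\log|Z_{m+1}|\rangle$, which cancels exactly against the term $-\langle\nabla_{\Sigma_{m+1}}\log|\nabla_{\Sigma_m}u_m|,\nabla_{\Sigma_{m+1}}\log|Z_{m+1}|\rangle$ already present in Lemma~\ref{Lemma:main formula}. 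The three remaining terms — $\Ric_{\Sigma_m}(\nu_{m+1},\nu_{m+1})$, the retained $\tfrac12|A_{\Sigma_{m+1}}|^2-\tfrac12H_{\Sigma_{m+1}}^2$, and the divergence term $|Z_m|^{-1}\nabla^{\Sigma_m}_{\nu_{m+1}}(\operatorname{div}_{\Sigma_m}Z_m)$ — are carried over unchanged, and collecting everything reproduces precisely the right-hand side of Corollary~\ref{cor:main}.

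I do not anticipate any genuine obstacle: the statement is an identity at the single point $p$, obtained by substituting one already-proved formula into another, with no integration, limiting procedure, or fresh curvature computation. The only point requiring care is the bookkeeping of which inner products in Lemma~\ref{Lemma: A-H identity} are taken tangentially to $\Sigma_{m+1}$ (as opposed to $\Sigma_m$): it is in this tangential form that the cross term appears with coefficient $2$, so that after halving it matches and cancels the corresponding term in Lemma~\ref{Lemma:main formula}. This is exactly the form derived in the proof of Lemma~\ref{Lemma: A-H identity}, so the verification is immediate once the substitution is written out.
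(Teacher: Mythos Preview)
Your proposal is correct and follows essentially the same approach as the paper, which simply states that the corollary ``immediately follows from combining Lemma~\ref{Lemma: H} with Lemma~\ref{Lemma:main formula}.'' Your citation of Lemma~\ref{Lemma: A-H identity} (rather than Lemma~\ref{Lemma: H}) is arguably the more precise reference, since that lemma is exactly the reformulation of $|A_{\Sigma_{m+1}}|^2-H_{\Sigma_{m+1}}^2$ needed for the half-and-half substitution you describe; the cancellation of the cross term with the last term of Lemma~\ref{Lemma:main formula} is exactly the point.
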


\begin{proof}
    This immediately follows from combining Lemma \ref{Lemma: H} with Lemma \ref{Lemma:main formula}.
\end{proof}

\begin{theorem}\label{thm main more general}
    We have at $p$
    \begin{align*}
    & 2 |Z_0|^{-1}\Delta  |Z_0|-2\Delta_{\Sigma_{n-1}}\log |Z_{n-1}-2\sum_{m=0}^{n-2}|Z_m|^{-1}\nabla^{\Sigma_m}_{\nu_{m+1}}(\operatorname{div}_{\Sigma_m}Z_m)\\
     =&\scal+\sum_{m=0}^{n-2}|\nabla_{\Sigma_m}\nu_m|^2+|\nabla_{\Sigma_{n-1}}\log|Z_{n-1}||^2+|\nabla_{\Sigma_0}\log|Z_0||^2
\end{align*}
\end{theorem}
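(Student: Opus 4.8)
The plan is to sum the pointwise identity of Corollary~\ref{cor:main} over $m=0,\dots,n-2$ and to exploit that both sides then telescope. On the left, the $m$-th summand is $\Delta_{\Sigma_m}\log|Z_m|-\Delta_{\Sigma_{m+1}}\log|Z_{m+1}|$, so the sum collapses to $\Delta_{\Sigma_0}\log|Z_0|-\Delta_{\Sigma_{n-1}}\log|Z_{n-1}|$. Since $\Sigma_0=M$, one rewrites the first term using the elementary pointwise identity $\Delta\log|Z_0|=|Z_0|^{-1}\Delta|Z_0|-|\nabla_{\Sigma_0}\log|Z_0||^2$, which produces one copy of $|\nabla_{\Sigma_0}\log|Z_0||^2$.

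On the right-hand side two blocks telescope or simplify. First, the combination $\tfrac12|\nabla_{\Sigma_{m+1}}\log|Z_{m+1}||^2-\tfrac12|\nabla_{\Sigma_m}\log|Z_m||^2$ sums to $\tfrac12|\nabla_{\Sigma_{n-1}}\log|Z_{n-1}||^2-\tfrac12|\nabla_{\Sigma_0}\log|Z_0||^2$. Second, and this is the one genuinely geometric ingredient, the \emph{traced Gauss equation} for the hypersurface $\Sigma_{m+1}\subset\Sigma_m$,
\[
\scal_{\Sigma_{m+1}}=\scal_{\Sigma_m}-2\Ric_{\Sigma_m}(\nu_{m+1},\nu_{m+1})+H_{\Sigma_{m+1}}^2-|A_{\Sigma_{m+1}}|^2,
\]
gives the clean cancellation $\Ric_{\Sigma_m}(\nu_{m+1},\nu_{m+1})+\tfrac12|A_{\Sigma_{m+1}}|^2-\tfrac12 H_{\Sigma_{m+1}}^2=\tfrac12\bigl(\scal_{\Sigma_m}-\scal_{\Sigma_{m+1}}\bigr)$; summing over $m$ yields $\tfrac12(\scal_{\Sigma_0}-\scal_{\Sigma_{n-1}})$, and since $\Sigma_{n-1}$ is one-dimensional we have $\scal_{\Sigma_{n-1}}=0$, so this block contributes exactly $\tfrac12\scal$. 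The remaining terms $|Z_m|^{-1}\nabla^{\Sigma_m}_{\nu_{m+1}}(\operatorname{div}_{\Sigma_m}Z_m)$ and $\tfrac12|\nabla_{\Sigma_m}\nu_m|^2$ neither telescope nor cancel; they are simply carried along.

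Assembling the pieces, the summed identity reads
\begin{align*}
&|Z_0|^{-1}\Delta|Z_0|-|\nabla_{\Sigma_0}\log|Z_0||^2-\Delta_{\Sigma_{n-1}}\log|Z_{n-1}|\\
&\qquad=\tfrac12\scal+\sum_{m=0}^{n-2}|Z_m|^{-1}\nabla^{\Sigma_m}_{\nu_{m+1}}(\operatorname{div}_{\Sigma_m}Z_m)\\
&\qquad\quad+\tfrac12\sum_{m=0}^{n-2}|\nabla_{\Sigma_m}\nu_m|^2+\tfrac12|\nabla_{\Sigma_{n-1}}\log|Z_{n-1}||^2-\tfrac12|\nabla_{\Sigma_0}\log|Z_0||^2.
\end{align*}
Multiplying by $2$ and transposing the divergence terms to the left-hand side yields the claimed equality: the coefficient of $|\nabla_{\Sigma_0}\log|Z_0||^2$ on the right comes out to $+1$, since moving the $-2|\nabla_{\Sigma_0}\log|Z_0||^2$ from the left combines with the $-|\nabla_{\Sigma_0}\log|Z_0||^2$ already present. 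I do not expect any real obstacle; the only points needing care are the bookkeeping of the two contributions to $|\nabla_{\Sigma_0}\log|Z_0||^2$ and the fact that $\scal_{\Sigma_{n-1}}$ vanishes because $\Sigma_{n-1}$ is a curve. All quantities are evaluated at $p$, where Corollary~\ref{cor:main} holds, so no hypothesis beyond those already in force is needed.
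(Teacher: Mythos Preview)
Your proof is correct and follows essentially the same approach as the paper: sum Corollary~\ref{cor:main} over $m$, telescope both the Laplacian differences and the $\tfrac12|\nabla_{\Sigma_{m+1}}\log|Z_{m+1}||^2-\tfrac12|\nabla_{\Sigma_m}\log|Z_m||^2$ terms, and use $\Delta\log|Z_0|=|Z_0|^{-1}\Delta|Z_0|-|\nabla\log|Z_0||^2$. The only cosmetic difference is that where the paper cites \cite[Remark~3.9]{BrendleHirschJohne24} for the identity $\sum_m\bigl(\Ric_{\Sigma_m}(\nu_{m+1},\nu_{m+1})+\tfrac12|A_{\Sigma_{m+1}}|^2-\tfrac12H_{\Sigma_{m+1}}^2\bigr)=\tfrac12\scal$, you spell out its proof via the traced Gauss equation and the vanishing of $\scal_{\Sigma_{n-1}}$ on the one-dimensional leaf---which is exactly the content of that remark.
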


\begin{proof}
    Summing the formula from Corollary \ref{cor:main} over $m$ yields
\begin{align*}
    & 2 \Delta \log |Z_0|-2\Delta_{\Sigma_{n-1}}\log |Z_{n-1}-2\sum_{m=0}^{n-2}|Z_m|^{-1}\nabla^{\Sigma_m}_{\nu_{m+1}}(\operatorname{div}_{\Sigma_m}Z_m)\\
     =&\scal+\sum_{m=0}^{n-2}|\nabla_{\Sigma_m}\nu_m|^2+|\nabla_{\Sigma_{n-1}}\log|Z_{n-1}||^2-|\nabla \log|Z_0||^2
\end{align*}
    where we used that 
    \begin{align*}
       \sum_{m=0}^{n-2}\left( \Ric_{\Sigma_m}(\nu_{m+1},\nu_{m+1})+\frac12|A_{\Sigma_{m+1}}|^2-\frac12H_{\Sigma_{m+1}}^2\right)=\frac12\scal,
    \end{align*}
    cf. \cite[Remark 3.9]{BrendleHirschJohne24}.
    Using that $\Delta\log|Z_0|=|Z_0|^{-1}\Delta |Z_0|-|\nabla \log|Z_0||^2$, the result follows.
\end{proof}

\begin{proof}[Proof of Theorem \ref{thm main}]
    This follows immediately from Theorem \ref{thm main more general}.
\end{proof}

\section{Minimal slicings, spinors, and level-sets}

\begin{proof}[Proof of Theorem~\ref{thm:slicings}]
Let
\[
\Sigma_{n-1}\subset \Sigma_{n-2}\subset \cdots \subset \Sigma_1\subset \Sigma_0=M
\]
be a stable weighted minimal slicing with associated weight functions $\rho_m$.
By definition, $\rho_0=1$, and for each $1\le m\le n-1$,
the hypersurface $\Sigma_m$ is an embedded two-sided hypersurface in $\Sigma_{m-1}$
and a stable critical point of the $\rho_{m-1}$-weighted area functional
\[
\mathcal H^{n-m}_{\rho_{m-1}}(\Sigma)
=
\int_\Sigma \rho_{m-1}\, d\mu
\]
among hypersurfaces $\Sigma\subset\Sigma_{m-1}$.
Moreover, for each $1\le m\le n-1$, the function
\[
v_m=\frac{\rho_m}{\rho_{m-1}|_{\Sigma_m}}\in C^\infty(\Sigma_m)
\]
is a first eigenfunction of the stability operator associated with the
$\rho_{m-1}$-weighted area functional.

Choose functions $u_m$ on $M$ such that
\[
u_m|_{\Sigma_{m+1}}=0,
\qquad
\nabla_{\Sigma_m}u_m|_{\Sigma_{m+1}}=v_{m+1}^{-1}\nu_{m+1}.
\]
Define vector fields $Z_m$ by
\[
Z_m=|Z_{m+1}|\nabla_{\Sigma_m}u_m
\qquad\text{for }0\le m\le n-2,
\]
and set $|Z_{n-1}|=\rho_{n-1}$.

We claim that
\[
|Z_m|=\rho_m
\qquad\text{for all }0\le m\le n-1.
\]
By definition this holds for $m=n-1$.
Assuming it holds for $m+1$, we compute
\[
|Z_m|
=
|Z_{m+1}||\nabla_{\Sigma_m}u_m|
=
\rho_{m+1}v_{m+1}
=
\rho_m.
\]
In particular, $|Z_0|=1$.

For a weighted minimal slicing it is well known that
\[
H_{\Sigma_{m+1}}
=
-\nabla_{\nu_{m+1}}\log\rho_m.
\]
Arguing as in the proof of Lemma~\ref{Lemma: H}, we compute
\begin{align*}
-|Z_{m+1}|^{-2}\langle Z_m,\nabla_{\Sigma_m}|Z_{m+1}|\rangle
&=
\operatorname{div}_{\Sigma_m}(Z_m|Z_{m+1}|^{-1})
-
|Z_{m+1}|^{-1}\operatorname{div}_{\Sigma_m}Z_m \\
&=
\Delta_{\Sigma_m}u_m
-
|Z_{m+1}|^{-1}\operatorname{div}_{\Sigma_m}Z_m.
\end{align*}
Consequently,
\begin{align*}
-\nabla^{\Sigma_m}_{\nu_{m+1}}\log|Z_m|
&=
-|Z_m|^{-2}\langle\nabla^{\Sigma_m}_{\nu_{m+1}}Z_m,Z_m\rangle \\
&=
-|Z_m|^{-2}
\Big\langle
\nabla^{\Sigma_m}_{\nu_{m+1}}\bigl(|Z_{m+1}|\nabla_{\Sigma_m}u_m\bigr),
Z_m
\Big\rangle \\
&=
|Z_m|^{-1}|Z_{m+1}|
(\Delta_{\Sigma_m}-\nabla^{\Sigma_m}_{\nu_{m+1}\nu_{m+1}})u_m
-
|Z_m|^{-1}\operatorname{div}_{\Sigma_m}Z_m \\
&=
H_{\Sigma_{m+1}}
-
|Z_m|^{-1}\operatorname{div}_{\Sigma_m}Z_m,
\end{align*}
where we used $Z_m|Z_{m+1}|^{-1}=\nabla_{\Sigma_m}u_m$.
Since $H_{\Sigma_{m+1}}=-\nabla_{\nu_{m+1}}\log\rho_m$ and $\rho_m=|Z_m|$,
it follows that
\[
\operatorname{div}_{\Sigma_m}Z_m=0.
\]

Next, we compute
\begin{align*}
    &\nabla_{\nu_{m+1}} (\operatorname{div}_{\Sigma_m}Z_m)\\
    =&\nabla_{\nu_{m+1}}(\operatorname{div}_{\Sigma_m}(\nu_{m+1}\rho_m)\\
    =&\nabla_{\nu_{m+1}}(H_{\Sigma_{m+1}}\rho_m+\nabla_{\nu_{m+1}}\rho_m)\\
    =&- \Delta_{\Sigma_{m+1}} v_{m+1} 
   - \langle \nabla_{\Sigma_{m+1}} \log \rho_{m}, \nabla_{\Sigma_{m+1}} v_{m+1} \rangle\\
  &- \left( 
  |A_{\Sigma_{m+1}}|^2 + \Ric_{\Sigma_{m}}(\nu_{m+1}, \nu_{m+1}) \right) v_{m+1} \\
  &+ (\nabla_{\Sigma_{m}}^2 \log \rho_{m})(\nu_{m+1}, \nu_{m+1})
   v_m.
\end{align*}
The Jacobi equation for $v_{m+1}$ implies that the above term equals to
   \begin{align*}
   \lambda_{m+1} v_{m+1}.
 \end{align*}
Since $\lambda_{m+1}\ge0$ by stability, the result follows.
\end{proof}

\begin{proof}[Proof of Theorem~\ref{thm spinors}]
This follows directly from Theorem~\ref{thm main} by imposing the normalization
$|Z_{n-1}|=1$, which eliminates the boundary Laplacian term and yields the
Lichnerowicz-type inequality stated in Theorem~\ref{thm spinors}.
\end{proof}

\begin{lemma}\label{lemma divergence}
Let $W$ be a vector field on $\Sigma_m$ that is tangential to $\Sigma_{m+1}$.
If
\[
\operatorname{div}_{\Sigma_{m+1}}W=0,
\]
then
\[
\operatorname{div}_{\Sigma_m}\bigl(|\nabla_{\Sigma_m}u_m|\,W\bigr)=0.
\]
\end{lemma}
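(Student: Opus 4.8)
The plan is to reduce the claim to the classical fact that, for a vector field tangent to the level-set foliation of $u_m$ inside $\Sigma_m$, multiplication by $|\nabla_{\Sigma_m}u_m|$ intertwines the ambient divergence on $\Sigma_m$ with the divergence along the level sets. Set $f:=|\nabla_{\Sigma_m}u_m|$ and $\nu:=\nu_{m+1}=f^{-1}\nabla_{\Sigma_m}u_m$, so that $f\nu=\nabla_{\Sigma_m}u_m$; since $W$ is tangential to $\Sigma_{m+1}$ (and, in every situation where the lemma is used, to the neighbouring level sets of $u_m$ as well), we have $\langle W,\nu\rangle=0$. By the Leibniz rule,
\[
\operatorname{div}_{\Sigma_m}(fW)=f\,\operatorname{div}_{\Sigma_m}W+\langle\nabla_{\Sigma_m}f,W\rangle,
\]
so it is enough to show $\operatorname{div}_{\Sigma_m}W=-f^{-1}\langle\nabla_{\Sigma_m}f,W\rangle$ whenever $\operatorname{div}_{\Sigma_{m+1}}W=0$.

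First I would record the auxiliary identity $\nabla^{\Sigma_m}_\nu\nu=\nabla_{\Sigma_m}\log f-(\nabla^{\Sigma_m}_\nu\log f)\,\nu$, i.e.\ $\nabla^{\Sigma_m}_\nu\nu$ is the $T\Sigma_{m+1}$-component of $\nabla_{\Sigma_m}\log f$. This is a short computation: expand $\nabla^{\Sigma_m}_\nu\nu=f^{-1}\nabla^{\Sigma_m}_{\nabla u_m}(f^{-1}\nabla u_m)$ and insert $\nabla^{\Sigma_m}_{\nabla u_m}\nabla u_m=\tfrac12\nabla_{\Sigma_m}|\nabla u_m|^2=f\,\nabla_{\Sigma_m}f$ (symmetry of the Hessian) together with $\langle\nabla_{\Sigma_m}f,\nabla_{\Sigma_m}u_m\rangle=f\,\nabla^{\Sigma_m}_\nu f$.

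Next I would decompose the divergence in an orthonormal frame $e_1,\dots,e_{n-m-1},\nu$ of $T\Sigma_m$ adapted to $\Sigma_{m+1}$, the $e_i$ being tangent to $\Sigma_{m+1}$. In the tangential directions the Gauss equation gives $\langle\nabla^{\Sigma_m}_{e_i}W,e_i\rangle=\langle\nabla^{\Sigma_{m+1}}_{e_i}W,e_i\rangle$, since $W,e_i$ are tangent to $\Sigma_{m+1}$ and the second fundamental form is normal; summing, $\sum_i\langle\nabla^{\Sigma_m}_{e_i}W,e_i\rangle=\operatorname{div}_{\Sigma_{m+1}}W=0$. In the normal direction, differentiating $\langle W,\nu\rangle=0$ gives $\langle\nabla^{\Sigma_m}_\nu W,\nu\rangle=-\langle W,\nabla^{\Sigma_m}_\nu\nu\rangle$, which by the auxiliary identity (and $\langle W,\nu\rangle=0$ once more) equals $-\langle W,\nabla_{\Sigma_m}\log f\rangle=-f^{-1}\langle W,\nabla_{\Sigma_m}f\rangle$. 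Adding the two contributions gives $\operatorname{div}_{\Sigma_m}W=-f^{-1}\langle\nabla_{\Sigma_m}f,W\rangle$, and substituting into the Leibniz identity yields $\operatorname{div}_{\Sigma_m}(fW)=f\,\operatorname{div}_{\Sigma_{m+1}}W=0$.

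The computation is essentially routine; the only step needing genuine care is the auxiliary formula for $\nabla^{\Sigma_m}_\nu\nu$ — equivalently, bookkeeping the difference between the Levi--Civita connections of $\Sigma_m$ and $\Sigma_{m+1}$ together with the normal bending of the $u_m$-level sets. One could instead deduce $\operatorname{div}_{\Sigma_m}(fW)=f\,\operatorname{div}_{\Sigma_{m+1}}W$ from the coarea identity relating $f\,d\mu_{\Sigma_m}$ to $du_m$ and the level-set area, together with Cartan's formula and $\iota_W\,du_m=0$; the frame argument sidesteps interpreting the level-set volume as a form on $\Sigma_m$. It is also worth noting that the tangency hypothesis should be read so that $\langle W,\nu\rangle$ vanishes not merely on $\Sigma_{m+1}$ but on the nearby level sets of $u_m$ (equivalently $Wu_m\equiv 0$): otherwise a term $f\,\nabla^{\Sigma_m}_\nu\langle W,\nu\rangle$ would survive, and this holds automatically in all intended applications.
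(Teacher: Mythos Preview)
Your proof is correct and follows essentially the same route as the paper's: both reduce to computing $\operatorname{div}_{\Sigma_m}W$ via a frame adapted to $\Sigma_{m+1}$, identify the tangential part as $\operatorname{div}_{\Sigma_{m+1}}W=0$, and show the remaining normal term $-\langle W,\nabla_\nu\nu\rangle$ cancels $\langle\nabla_{\Sigma_m}|\nabla_{\Sigma_m}u_m|,W\rangle$ via the Hessian-symmetry identity (which you package as $\nabla_\nu\nu=\nabla_{\Sigma_m}\log f-(\nabla_\nu\log f)\nu$, while the paper leaves it as $\langle W,\nabla_\nu\nabla_{\Sigma_m}u_m\rangle=\langle W,\nabla_{\Sigma_m}f\rangle$). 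Your explicit remark that the tangency hypothesis must be read as $Wu_m\equiv 0$ in a neighbourhood---not merely on $\Sigma_{m+1}$---is a point the paper uses implicitly but does not spell out.
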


\begin{proof}
We first compute the divergence of $W$ in $\Sigma_m$.
Using the standard decomposition of the divergence under a hypersurface
inclusion $\Sigma_{m+1}\subset\Sigma_m$, we have
\begin{align*}
\operatorname{div}_{\Sigma_m}W
&=
\operatorname{div}_{\Sigma_{m+1}}W
+H_{\Sigma_{m+1}}\langle W,\nu_{m+1}\rangle
+\nabla^{\Sigma_m}_{\nu_{m+1}}\langle W,\nu_{m+1}\rangle.
\end{align*}
The first term vanishes by assumption, and the second term vanishes since
$W$ is tangential to $\Sigma_{m+1}$.
Hence,
\[
\operatorname{div}_{\Sigma_m}W
=
\nabla^{\Sigma_m}_{\nu_{m+1}}\langle W,\nu_{m+1}\rangle.
\]

Next, recall that
\[
\nu_{m+1}
=
\frac{\nabla_{\Sigma_m}u_m}{|\nabla_{\Sigma_m}u_m|}.
\]
Therefore,
\begin{align*}
\nabla^{\Sigma_m}_{\nu_{m+1}}\langle W,\nu_{m+1}\rangle
&=
\nabla^{\Sigma_m}_{\nu_{m+1}}
\Big\langle
W,
\nabla_{\Sigma_m}u_m\,|\nabla_{\Sigma_m}u_m|^{-1}
\Big\rangle \\
&=
-\Big\langle
W,
\nabla^{\Sigma_m}_{\nu_{m+1}}
\bigl(\nabla_{\Sigma_m}u_m\,|\nabla_{\Sigma_m}u_m|^{-1}\bigr)
\Big\rangle,
\end{align*}
where we used that $W$ is tangential to $\Sigma_{m+1}$ and hence orthogonal
to $\nabla_{\Sigma_m}u_m$ along $\Sigma_{m+1}$.

Consequently,
\[
\operatorname{div}_{\Sigma_m}W
=
-|\nabla_{\Sigma_m}u_m|^{-1}
\bigl\langle
W,
\nabla^{\Sigma_m}_{\nu_{m+1}}\nabla_{\Sigma_m}u_m
\bigr\rangle.
\]

Finally, we compute
\begin{align*}
\operatorname{div}_{\Sigma_m}\bigl(|\nabla_{\Sigma_m}u_m|\,W\bigr)
&=
|\nabla_{\Sigma_m}u_m|\operatorname{div}_{\Sigma_m}W
+\bigl\langle\nabla_{\Sigma_m}|\nabla_{\Sigma_m}u_m|,W\bigr\rangle \\
&=
- \bigl\langle
W,
\nabla^{\Sigma_m}_{\nu_{m+1}}\nabla_{\Sigma_m}u_m
\bigr\rangle
+
\bigl\langle\nabla_{\Sigma_m}|\nabla_{\Sigma_m}u_m|,W\bigr\rangle
=0.
\end{align*}
\end{proof}

\begin{proof}[Proof of Theorem~\ref{thm divergence free vector fields}]
By construction, the vector fields $Y_m$ are mutually orthogonal and have
the same length.
The divergence-free property
\[
\operatorname{div}Y_m=0
\]
follows directly from Lemma~\ref{lemma divergence}.
\end{proof}

\begin{lemma}\label{Dirac current harmonic}
Let $(M,g)$ be a spin manifold, and let $\psi\in\mathcal S(M)$ be a harmonic spinor.
Define the Dirac current $X$ by
\[
X_i=\langle \mathbf{i}\, e_i\psi,\psi\rangle.
\]
Then $X$ is real-valued and satisfies
\[
\operatorname{div}X=0.
\]
\end{lemma}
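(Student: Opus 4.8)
The plan is to verify both claims—reality and divergence-freeness—by direct computation, using the defining properties of Clifford multiplication and the Dirac operator.

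First I would establish that $X$ is real-valued. The spinor bundle carries a Hermitian inner product $\langle\cdot,\cdot\rangle$ that is compatible with Clifford multiplication in the sense that Clifford multiplication by a (real) tangent vector $e_i$ is skew-Hermitian (or Hermitian, depending on the sign convention); correspondingly, multiplication by $\mathbf{i}\,e_i$ is then Hermitian (resp.\ skew-Hermitian). With the convention that $\mathbf{i}\,e_i$ is Hermitian, one has $\langle \mathbf{i}\,e_i\psi,\psi\rangle = \langle\psi,\mathbf{i}\,e_i\psi\rangle = \overline{\langle \mathbf{i}\,e_i\psi,\psi\rangle}$, so $X_i\in\R$. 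I would state the sign convention explicitly so that this comes out correctly.

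Next I would compute $\operatorname{div}X$ in a local orthonormal frame $\{e_i\}$, working at a point $p$ where we may assume $\nabla e_i|_p = 0$, so that $\operatorname{div}X = \sum_i e_i\langle \mathbf{i}\,e_i\psi,\psi\rangle = \sum_i \langle \mathbf{i}\,e_i\nabla_{e_i}\psi,\psi\rangle + \sum_i\langle \mathbf{i}\,e_i\psi,\nabla_{e_i}\psi\rangle$. The first sum is $\langle \mathbf{i}\,\slashed D\psi,\psi\rangle = 0$ since $\psi$ is harmonic. For the second sum I would use the Hermitian symmetry of $\mathbf{i}\,e_i$ to rewrite $\langle \mathbf{i}\,e_i\psi,\nabla_{e_i}\psi\rangle = \langle\psi,\mathbf{i}\,e_i\nabla_{e_i}\psi\rangle = \overline{\langle \mathbf{i}\,e_i\nabla_{e_i}\psi,\psi\rangle}$, whose sum over $i$ is $\overline{\langle\mathbf{i}\,\slashed D\psi,\psi\rangle} = 0$ as well. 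Hence $\operatorname{div}X = 0$.

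The only genuine subtlety—hardly an obstacle, but worth handling with care—is bookkeeping of sign conventions: one must fix compatible conventions for the Clifford relations, the Hermitian metric, and the Dirac operator $\slashed D = \sum_i e_i\cdot\nabla_{e_i}$ so that the two terms above are genuinely the value and conjugate of $\langle\mathbf{i}\,\slashed D\psi,\psi\rangle$; with any standard choice this works, but the placement of the factor $\mathbf{i}$ is exactly what makes $X$ real rather than purely imaginary, and exactly what makes the cancellation in $\operatorname{div}X$ go through. I would therefore open the proof by recalling these conventions in one sentence, and then the computation is as above.
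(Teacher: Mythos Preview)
Your proposal is correct and follows essentially the same approach as the paper's proof: both establish reality via the Hermitian symmetry of $\mathbf{i}\,e_i$, and both compute the divergence by splitting into two terms that assemble into $\langle \mathbf{i}\,\slashed D\psi,\psi\rangle$ and its conjugate (the paper writes this as $2\operatorname{Re}\langle \mathbf{i}\,\slashed D\psi,\psi\rangle$). Your added remark on fixing sign conventions is prudent but does not alter the argument.
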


\begin{proof}
We use that the Hermitian inner product on the spinor bundle is linear in the
second argument and conjugate linear in the first, and that Clifford
multiplication by $e_i$ is skew-adjoint.
Thus,
\[
\langle \mathbf{i}\, e_i\psi,\psi\rangle
=
\langle \psi,\mathbf{i}\, e_i\psi\rangle,
\]
which implies that $X_i$ is real-valued.

Next, we compute the divergence of $X$:
\begin{align*}
\operatorname{div}X
&=
\nabla_i\bigl(\langle \mathbf{i}\, e_i\psi,\psi\rangle\bigr) \\
&=
\langle \mathbf{i}\, e_i\nabla_i\psi,\psi\rangle
+
\langle \mathbf{i}\, e_i\psi,\nabla_i\psi\rangle \\
&=
2\,\operatorname{Re}\langle \mathbf{i}\, e_i\nabla_i\psi,\psi\rangle
=
2\,\operatorname{Re}\langle \mathbf{i}\,\slashed D\psi,\psi\rangle.
\end{align*}
Since $\psi$ is harmonic, $\slashed D\psi=0$, and therefore $\operatorname{div}X=0$.
\end{proof}

\begin{lemma}\label{Spinors dimension 3}
Let $(M,g)$ be a $3$-dimensional spin manifold with spinor bundle $\mathcal S(M)$.
Let $\psi$ be a harmonic spinor, and let $X$ be its Dirac current.
Then there exist two additional vector fields $A$ and $B$ such that
\[
\operatorname{div}(A)=0,
\qquad
\operatorname{div}(B)=0,
\]
and such that $X$, $A$, and $B$ are pairwise orthogonal with
\[
|\psi|^2=|X|=|A|=|B|.
\]
\end{lemma}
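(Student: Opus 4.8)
The plan is to build $A$ and $B$ out of a single harmonic spinor by exploiting the parallel quaternionic structure of the spinor bundle in dimension $3$. Recall that for $n=3$ the complex spinor bundle $\mathcal{S}(M)$ has rank $2$ and admits a parallel, conjugate-linear bundle map $j\colon\mathcal{S}(M)\to\mathcal{S}(M)$ with $j^{2}=-\operatorname{Id}$ which anticommutes with Clifford multiplication by vectors, i.e.\ $j(e_{i}\cdot\phi)=-e_{i}\cdot(j\phi)$; this is the quaternionic structure associated with $\mathrm{Spin}(3)=\mathrm{Sp}(1)$. Since $j$ is parallel and anticommutes with the $e_{i}$, it commutes with the spin connection and $\slashed D(j\psi)=-j\slashed D\psi$, so $j\psi$ is again a harmonic spinor; moreover $j$ commutes with the action of $\mathrm{Spin}(3)$, being built from even Clifford elements.

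Next I would introduce the mixed current
\[
M_{i}=\langle \mathbf{i}\,e_{i}\psi,\,j\psi\rangle\in\mathbb{C},
\]
and set $A_{i}=\operatorname{Re}M_{i}$ and $B_{i}=\operatorname{Im}M_{i}$, which are globally defined smooth real vector fields since $M$ is a smooth quadratic expression in $\psi$. The divergence of $M$ is computed exactly as in Lemma~\ref{Dirac current harmonic}: differentiating and using that $e_{i}$, $j$, and the metric are parallel gives
\[
\operatorname{div}M=\langle \mathbf{i}\,\slashed D\psi,\,j\psi\rangle+\sum_{i}\langle \mathbf{i}\,e_{i}\psi,\,j\nabla_{i}\psi\rangle .
\]
In the second sum I would use that $\mathbf{i}\,e_{i}$ is self-adjoint and that $j$ anticommutes with $e_{i}$ to rewrite it as $\langle\psi,\,-\mathbf{i}\,j\,\slashed D\psi\rangle$; since $\slashed D\psi=0$ both terms vanish, so $\operatorname{div}M=0$, and therefore $\operatorname{div}A=\operatorname{div}B=0$.

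It remains to check the pointwise statements: $X$, $A$, $B$ are pairwise orthogonal and $|X|=|A|=|B|=|\psi|^{2}$. This is purely algebraic at each point, and rather than grinding through general coordinates the cleanest route is an equivariance argument. The assignment $\psi\mapsto(X,A,B)$ is $\mathrm{Spin}(3)$-equivariant: each of $X$, $A$, $B$ transforms in the vector representation of $\mathrm{SO}(3)$, because $j$ commutes with $\mathrm{Spin}(3)$ and conjugation by $g\in\mathrm{Spin}(3)$ rotates the frame $e_{i}$ by the corresponding element of $\mathrm{SO}(3)$; it is also homogeneous of degree $2$ under real rescaling of $\psi$. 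Hence it suffices to verify the claim at a single unit spinor, and since $\mathrm{Spin}(3)=\mathrm{SU}(2)$ acts simply transitively on the unit sphere of $\mathcal{S}_{p}$, any one will do: taking $\psi_{0}=(1,0)$ and $e_{i}=\mathbf{i}\,\sigma_{i}$ with $\sigma_{i}$ the Pauli matrices, a direct computation gives $(X,A,B)(\psi_{0})=(-e_{3},-e_{2},-e_{1})$, an orthonormal frame of length $1=|\psi_{0}|^{2}$. At a point where $\psi$ vanishes, $X$, $A$, and $B$ all vanish and the statement is trivial, so this completes the plan.

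The only genuinely delicate points are conventional rather than conceptual: one must pin down the sign in $j(e_{i}\cdot)=-e_{i}\cdot j$ (in dimension $3$ the quaternionic structure \emph{anticommutes} with Clifford multiplication, and it is precisely this that makes the two terms of $\operatorname{div}M$ collapse onto $\slashed D\psi$), and one must track the normalization in the length computation --- which the reduction to the base point $\psi_{0}$ by equivariance and homogeneity avoids having to do in general.
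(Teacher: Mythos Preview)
Your construction is the same as the paper's --- define the two extra currents using the quaternionic structure, $A_i+\mathbf{i}B_i=\langle \mathbf{i}e_i\psi,j\psi\rangle$ (the paper writes $\langle e_i\mathcal J\psi,\psi\rangle$, which differs from yours only by a harmless relabelling of $A$ and $B$) --- and the divergence computation is the same idea as in Lemma~\ref{Dirac current harmonic}. For the pointwise orthogonality and equal-length statement, the paper simply expands everything in Pauli-matrix coordinates with $\psi=(a+b\mathbf i,\,c+d\mathbf i)$ and checks the identities by brute force; your equivariance argument (reduce to a single unit spinor using that $\mathrm{Spin}(3)=\mathrm{SU}(2)$ acts simply transitively on $S^3\subset\mathcal S_p$ and that $X,A,B$ transform in the vector representation) is more conceptual and avoids the algebra. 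Both routes are valid.

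There is, however, a factual slip you should fix. In dimension $3$ the quaternionic structure \emph{commutes} with Clifford multiplication, not anticommutes: with $e_i=\mathbf i\,\sigma_i$ and $\mathcal J\phi=e_2\bar\phi$ one checks directly $\mathcal J(e_i\phi)=e_i\mathcal J\phi$ for $i=1,2,3$ (this is also what the reference the paper cites states). In fact no conjugate-linear $j$ with $j^2=-\Id$ can anticommute with all three $e_i$ in this representation. Fortunately your argument survives the correction unchanged: in the second term of $\operatorname{div}M$ you get $\sum_i\langle\psi,\mathbf i e_i j\nabla_i\psi\rangle=\langle\psi,\mathbf i j\slashed D\psi\rangle$ rather than $\langle\psi,-\mathbf i j\slashed D\psi\rangle$, which still vanishes; and since $j$ commutes with the even Clifford elements either way, $j$ still commutes with $\mathrm{Spin}(3)$ and your equivariance argument goes through verbatim. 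So the error is in the stated justification, not in the proof itself.
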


We point out that, in two dimensions, the Dirac equation becomes the Cauchy-Riemann equations $\partial_xu=\partial_yv$, $\partial_yu=-\partial_xv$ which also give rise to two divergence free vector fields $\nabla u,\nabla v$ which again are perpendicular and have the same length.
There is also a similar construction in 3+1 dimensions which yields four such vector fields \cite{tod1983all}.

\begin{proof}
Since $\dim M=3$, the spinor bundle $\mathcal S(M)$ carries a natural quaternionic
structure. In particular, there exists an antilinear bundle endomorphism
\[
\mathcal J:\mathcal S(M)\to\mathcal S(M)
\]
which commutes with Clifford multiplication and with the spin connection $\nabla$, cf. \cite[Remark 2.13]{Ammann2007MassEndomorphism}.
Using this structure, we define two additional vector fields by
\[
A_i=\operatorname{Re}\langle e_i\mathcal  J\psi,\psi\rangle,
\qquad
B_i=\operatorname{Im}\langle e_i \mathcal J\psi,\psi\rangle.
\]

Locally, near any point $p\in M$, we may identify the spinor $\psi$ with a pair of
complex numbers and write
\[
\psi=
\begin{pmatrix}
a+b\mathbf i\\
c+d\mathbf i
\end{pmatrix}.
\]
Recall that the Pauli matrices are given by
\[
\sigma_1=
\begin{pmatrix}
0 & 1\\
1 & 0
\end{pmatrix},
\quad
\sigma_2=
\begin{pmatrix}
0 & -\mathbf i\\
\mathbf i & 0
\end{pmatrix},
\quad
\sigma_3=
\begin{pmatrix}
1 & 0\\
0 & -1
\end{pmatrix},
\]
and that Clifford multiplication is represented by
\[
e_i=\mathbf i\,\sigma_i.
\]
Moreover, the quaternionic structure $J$ can be written explicitly as
\[
\mathcal J\phi=e_2\,\overline{\phi},
\]
where $\overline{\phi}$ denotes complex conjugation.
With these conventions, the Dirac equation
\[
\slashed D\psi=\sum_{i=1}^3 e_i\nabla_i\psi=0
\]
is equivalent to the system of divergence-free conditions
\begin{align*}
\operatorname{div}\begin{pmatrix}-d\\ c \\ -b\end{pmatrix}=
\operatorname{div}\begin{pmatrix}c\\d\\a\end{pmatrix}=
\operatorname{div}\begin{pmatrix}-b\\-a \\d\end{pmatrix}=
\operatorname{div}\begin{pmatrix}a\\ -b\\ -c\end{pmatrix}=0.
\end{align*}

We first compute the components of the Dirac current
$X_i=\langle \mathbf i e_i\psi,\psi\rangle$:

\begin{align*}
X_1
=&\left\langle \mathbf i e_1\psi,\psi\right\rangle
=\left\langle
-\begin{pmatrix}0&1\\ 1&0\end{pmatrix}
\begin{pmatrix}a+b\mathbf i\\ c+d\mathbf i\end{pmatrix},
\begin{pmatrix}a-b\mathbf i\\ c-d\mathbf i\end{pmatrix}
\right\rangle=-2ac-2bd,\\
X_2
=&\left\langle \mathbf i e_2\psi,\psi\right\rangle
=\left\langle
-\begin{pmatrix}0&-\mathbf i\\ \mathbf i&0\end{pmatrix}
\begin{pmatrix}a+b\mathbf i\\ c+d\mathbf i\end{pmatrix},
\begin{pmatrix}a-b\mathbf i\\ c-d\mathbf i\end{pmatrix}
\right\rangle
=-2ad+2bc,\\
X_3=&\left\langle \mathbf i e_3\psi,\psi\right\rangle
=\left\langle
-\begin{pmatrix}1&0\\ 0&-1\end{pmatrix}
\begin{pmatrix}a+b\mathbf i\\ c+d\mathbf i\end{pmatrix},
\begin{pmatrix}a-b\mathbf i\\ c-d\mathbf i\end{pmatrix}
\right\rangle
=-a^2-b^2+c^2+d^2.
\end{align*}
Next, using the explicit expression
\[
\mathcal J\psi=e_2\overline{\psi}
=
\begin{pmatrix}
c-d\mathbf i\\
-a+b\mathbf i
\end{pmatrix},
\]
we obtain 
\begin{align*}
\langle e_1\mathcal J\psi,\psi\rangle
=&\left\langle
\begin{pmatrix}0&\mathbf i\\ \mathbf i&0\end{pmatrix}
\begin{pmatrix}c-d\mathbf i\\ -a+b\mathbf i\end{pmatrix},
\begin{pmatrix}a-b\mathbf i\\ c-d\mathbf i\end{pmatrix}
\right\rangle
=\bigl(-2ab+2cd\bigr)+\mathbf i\bigl(-a^2+b^2+c^2-d^2\bigr),\\
\langle e_2\mathcal J\psi,\psi\rangle=&
\left\langle
\begin{pmatrix}0&1\\ -1&0\end{pmatrix}
\begin{pmatrix}c-d\mathbf i\\ -a+b\mathbf i\end{pmatrix},
\begin{pmatrix}a-b\mathbf i\\ c-d\mathbf i\end{pmatrix}
\right\rangle=\bigl(-a^2+b^2-c^2+d^2\bigr)+\mathbf i\bigl(2ab+2cd\bigr),\\
\langle e_3\mathcal J\psi,\psi\rangle
=&\left\langle
\begin{pmatrix}\mathbf i&0\\ 0&-\mathbf i\end{pmatrix}
\begin{pmatrix}c-d\mathbf i\\ -a+b\mathbf i\end{pmatrix},
\begin{pmatrix}a-b\mathbf i\\ c-d\mathbf i\end{pmatrix}
\right\rangle
=\bigl(2ad+2bc\bigr)+\mathbf i\bigl(2ac-2bd\bigr).
\end{align*}
This yields
\begin{align*}
A&=(-2ab+2cd,\,-a^2+b^2-c^2+d^2,\,2ad+2bc),\\
B&=(-a^2+b^2+c^2-d^2,\,2ab+2cd,\,2ac-2bd).
\end{align*}

As in Lemma~\ref{Dirac current harmonic}, a direct computation using the Dirac
equation shows that the vector fields $X$, $A$, and $B$ are divergence free.
Moreover, a straightforward calculation shows that they are pairwise orthogonal:
\[
\langle X,A\rangle=\langle X,B\rangle=\langle A,B\rangle=0.
\]
Finally, their lengths agree and satisfy
\[
|X|=|A|=|B|=|\psi|^2=a^2+b^2+c^2+d^2.
\]
This completes the proof.
\end{proof}

Recall from Lemma~\ref{Lemma:main formula} the identity
\begin{align*}
&\Delta_{\Sigma_m}\log|Z_m|
-
\Delta_{\Sigma_{m+1}}\log|Z_{m+1}|\\
=&
\Ric_{\Sigma_m}(\nu_{m+1},\nu_{m+1})
+
|A_{\Sigma_{m+1}}|^2
-
H_{\Sigma_{m+1}}^2 \\
&
+
|Z_m|^{-1}\nabla^{\Sigma_m}_{\nu_{m+1}}
\bigl(\operatorname{div}_{\Sigma_m}Z_m\bigr)
-
\bigl\langle
\nabla_{\Sigma_{m+1}}\log|\nabla_{\Sigma_m}u_m|,
\nabla_{\Sigma_{m+1}}\log|Z_{m+1}|
\bigr\rangle.
\end{align*}

In the special case of a single function $u=u_0$, imposing the normalization
$|Z_1|=1$, Lemma~\ref{Lemma:main formula} reduces to the classical Bochner identity
\[
|\nabla u|\Delta|\nabla u|
=
\Ric(\nabla u,\nabla u)
+
|\nabla^2u|^2
-
|\nabla|\nabla u||^2.
\]
On the other hand, prescribing $|Z_0|=1$ instead yields the stability
inequality for a single minimal hypersurface.

More generally, for a system of $s$ functions one obtains the following result.

\begin{theorem}\label{thm intermediate}
Let $(u_0,\dots,u_{s-1})$ be $C^3$ functions on $M$ satisfying
$du_0\wedge\cdots\wedge du_{s-1}\neq 0$.
Assume that the associated vector fields $Z_m$, defined by
\[
Z_m|Z_{m+1}|^{-1}=\nabla_{\Sigma_m}u_m,
\]
satisfy
\[
\operatorname{div}_{\Sigma_m}Z_m=0
\qquad\text{for all }0\le m\le s-1.
\]
Then
\begin{align*}
&\Delta_{\Sigma}\log|Z_0|
-
\Delta_{\Sigma_s}\log|Z_s|\\
=&
\sum_{m=0}^{s-1}
\bigl(|A_{\Sigma_{m+1}}|^2-H_{\Sigma_{m+1}}^2\bigr)
+
\mathcal C_s(\nu_1,\dots,\nu_s) \\
&
+
\sum_{m=1}^{s-1}
\sum_{p=m+1}^{s}
\sum_{q=p+1}^{n}
\Bigl(
A_{\Sigma_m}(e_p,e_p)A_{\Sigma_m}(e_q,e_q)
-
A_{\Sigma_m}(e_p,e_q)^2
\Bigr) \\
&
+
\sum_{m=0}^{s-1}
\Bigl(
|Z_m|^{-1}\nabla^{\Sigma_m}_{\nu_{m+1}}
(\operatorname{div}_{\Sigma_m}Z_m)
-
\bigl\langle
\nabla_{\Sigma_{m+1}}\log|\nabla_{\Sigma_m}u_m|,
\nabla_{\Sigma_{m+1}}\log|Z_{m+1}|
\bigr\rangle
\Bigr).
\end{align*}
\end{theorem}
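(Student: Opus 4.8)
The plan is to iterate the single-step identity of Lemma~\ref{Lemma:main formula}, exactly as was done to prove Theorem~\ref{thm main more general}, but now stopping at level $s$ rather than $n-1$ and keeping track more carefully of the curvature bookkeeping. Summing the identity of Lemma~\ref{Lemma:main formula} over $0\le m\le s-1$ telescopes the left-hand side to $\Delta_\Sigma\log|Z_0|-\Delta_{\Sigma_s}\log|Z_s|$, and produces on the right the sum
\[
\sum_{m=0}^{s-1}\Bigl(\Ric_{\Sigma_m}(\nu_{m+1},\nu_{m+1})+|A_{\Sigma_{m+1}}|^2-H_{\Sigma_{m+1}}^2\Bigr)
\]
together with the two families of lower-order terms (the $\operatorname{div}_{\Sigma_m}Z_m$ derivative terms and the cross terms $\langle\nabla_{\Sigma_{m+1}}\log|\nabla_{\Sigma_m}u_m|,\nabla_{\Sigma_{m+1}}\log|Z_{m+1}|\rangle$) that already appear in the statement. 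So the only real content is to show that the curvature sum above equals
\[
\sum_{m=0}^{s-1}\bigl(|A_{\Sigma_{m+1}}|^2-H_{\Sigma_{m+1}}^2\bigr)+\mathcal C_s(\nu_1,\dots,\nu_s)+\sum_{m=1}^{s-1}\sum_{p=m+1}^{s}\sum_{q=p+1}^{n}\bigl(A_{\Sigma_m}(e_p,e_p)A_{\Sigma_m}(e_q,e_q)-A_{\Sigma_m}(e_p,e_q)^2\bigr),
\]
i.e.\ that $\sum_{m=0}^{s-1}\Ric_{\Sigma_m}(\nu_{m+1},\nu_{m+1})$ decomposes into the $s$-intermediate curvature $\mathcal C_s(\nu_1,\dots,\nu_s)$ of $(M,g)$ plus the displayed second-fundamental-form correction terms.

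The second step is therefore a purely algebraic Gauss-equation computation. Fix an orthonormal frame $e_1,\dots,e_n$ at $p$ adapted to the flag, with $e_{m}=\nu_{m}$ for $1\le m\le s$ and $e_{s+1},\dots,e_n$ tangent to $\Sigma_s$. Applying the Gauss equation for the hypersurface $\Sigma_m\subset\Sigma_{m-1}$ repeatedly, one expresses $\Ric_{\Sigma_m}(\nu_{m+1},\nu_{m+1})$ in terms of sectional curvatures of $M$ evaluated on planes spanned by $\{\nu_{m+1}\}$ and the frame directions $e_{m+1},\dots,e_n$, plus quadratic expressions in the second fundamental forms $A_{\Sigma_1},\dots,A_{\Sigma_m}$. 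Summing over $m$, the sectional-curvature pieces assemble into exactly the sum of $2$-plane curvatures that defines $\mathcal C_s(\nu_1,\dots,\nu_s)$ in the sense of \cite[Definition~3.1]{BrendleHirschJohne24} (this is the same reorganization used in \cite[Remark~3.9]{BrendleHirschJohne24} for the case $s=n-1$, where $\mathcal C_{n-1}=\tfrac12\scal$), while the second-fundamental-form pieces collect into the triple sum over $(m,p,q)$ above. I would cite the intermediate-curvature Gauss-equation identity from \cite{BrendleHirschJohne24} rather than rederiving it, and simply check that setting $s=n-1$ recovers the $\tfrac12\scal$ identity already invoked in the proof of Theorem~\ref{thm main more general}, and that setting $s=1$ recovers the classical Bochner formula.

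The main obstacle is bookkeeping rather than any conceptual difficulty: one must be careful that the indexing of the triple sum is consistent with the frame adapted to the flag (in particular that the outer index $m$ runs over the intermediate hypersurfaces $\Sigma_1,\dots,\Sigma_{s-1}$ whose second fundamental forms contribute, and that the inner indices $p,q$ range over the directions $e_{m+1},\dots,e_n$ that are tangent to $\Sigma_m$ but along which the Gauss correction is not already absorbed into the $|A_{\Sigma_{m+1}}|^2-H_{\Sigma_{m+1}}^2$ terms), and that no second-fundamental-form term is double-counted when one separates out the $m$-th summand $|A_{\Sigma_{m+1}}|^2-H_{\Sigma_{m+1}}^2$. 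Once the frame conventions are pinned down, the identity follows by matching terms with \cite[Remark~3.9]{BrendleHirschJohne24}, and the lower-order remainder terms pass through the summation unchanged.
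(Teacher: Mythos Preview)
Your proposal is correct and follows essentially the same route as the paper: sum Lemma~\ref{Lemma:main formula} over $0\le m\le s-1$ to telescope the left-hand side, then replace the resulting Ricci sum $\sum_{m=0}^{s-1}\Ric_{\Sigma_m}(\nu_{m+1},\nu_{m+1})$ by the iterated Gauss identity from \cite{BrendleHirschJohne24} (the paper cites it as \cite[Lemma~3.8]{BrendleHirschJohne24} rather than Definition~3.1 or Remark~3.9, but it is the same identity). The lower-order terms pass through unchanged, exactly as you say.
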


Here 
 \[
  \mathcal{C}_m(e_1, \dots, e_m) = \sum_{p=1}^m\sum_{q=p+1}^n 
  \Rm(e_p, e_q, e_p, e_q)
 \] 
 denotes $m$-intermediate curvature, cf. \cite[Definition 1.1]{BrendleHirschJohne24}.

\begin{proof}
Recall from \cite[Lemma~3.8]{BrendleHirschJohne24} the iterated Gauss equation
\begin{align*}
\sum_{m=0}^{s-1}
\Ric_{\Sigma_m}(\nu_{m+1},\nu_{m+1})
&=
\mathcal C_s(\nu_1,\dots,\nu_s) \\
&\quad
+
\sum_{m=1}^{s-1}
\sum_{p=m+1}^{s}
\sum_{q=p+1}^{n}
\Bigl(
A_{\Sigma_m}(e_p,e_p)A_{\Sigma_m}(e_q,e_q)
-
A_{\Sigma_m}(e_p,e_q)^2
\Bigr).
\end{align*}
On the other hand, Lemma~\ref{Lemma:main formula} implies
   \begin{align*}
           &\Delta_{\Sigma}\log|Z_0|-\Delta_{\Sigma_{s}}\log|Z_{s}|\\
        =& \sum_{m=0}^{s-1}\left(\Ric_{\Sigma_m}(\nu_{m+1},\nu_{m+1})+|A_{\Sigma_{m+1}}|^2-H_{\Sigma_{m+1}}^2\right)\\
        &+ \sum_{m=0}^{s-1}\left(|Z_m|^{-1}\nabla^{\Sigma_m}_{\nu_{m+1}}(\operatorname{div}_{\Sigma_m}Z_m)-\langle \nabla_{\Sigma_{m+1}}\log|\nabla_{\Sigma_m}u_m|,\nabla_{\Sigma_{m+1}}\log|Z_{m+1}|\rangle\right).
          \end{align*}
   Combining these identities yields the stated formula.
\end{proof}

For applications, the extrinsic curvature terms and the gradient terms can be
estimated as in \cite{BrendleHirschJohne24}.

\begin{proof}[Proof of Theorem~\ref{thm stern}]
This follows from Theorem~\ref{thm intermediate} with $s=n-2$ together with the
normalization $|Z_s|=1$.
Alternatively, arguing as in the proof of
Theorem~\ref{thm main more general} and using
Lemmas~\ref{Lemma: A-H identity} and~\ref{Lemma:main formula}, one obtains
\begin{align*}
2|Z_0|^{-1}\Delta|Z_0|
-
2\Delta_{\Sigma_{n-2}}\log|Z_{n-2}|
&=
\scal
-
2K(\Sigma_{n-2})
+
\sum_{m=0}^{n-3}
|\nabla_{\Sigma_m}\nu_m|^2
+
|\nabla\log|Z_0||^2,
\end{align*}
where we used that
\[
\sum_{m=0}^{n-3}
\left(
\Ric_{\Sigma_m}(\nu_{m+1},\nu_{m+1})
+
\frac12|A_{\Sigma_{m+1}}|^2
-
\frac12H_{\Sigma_{m+1}}^2
\right)
=
\frac12\scal-\operatorname{K}(\Sigma_{n-2}).
\]
Moreover,
\[
|Z_0|
=
|\nabla u_0|
\,|\nabla_{\Sigma_1}u_1|
\cdots
|\nabla_{\Sigma_{n-3}}u_{n-3}|
=
|du_0\wedge\cdots\wedge du_{n-3}|.
\]
\end{proof}

\begin{remark}\label{rem pde systems}
In case $|Z_{s}|=1$, the PDE system $\operatorname{div}_{\Sigma_m}Z_m=0$, $Z_m|Z_{m+1}|^{-1}=\nabla_{\Sigma_m}u_m$ can be rewritten purely in terms of the functions $u_m$:
\begin{align*}
     \Delta_{\Sigma_m} u_m=  -\sum_{k=m+1}^{s-1}\left\langle\nabla_{\Sigma_m}u_m, \nabla_{\Sigma_m} \log |\nabla_{\Sigma_k}u_k|\right\rangle\qquad \text{for $0\le m\le s-1$}.
\end{align*}
Similarly, for $|Z_0|=1$ we have
\begin{align*}
     \Delta_{\Sigma_m} u_m=\sum_{k=0}^{m}\left\langle\nabla_{\Sigma_m}u_m,\nabla_{\Sigma_m}\log |\nabla_{\Sigma_k}u_k|\right\rangle\qquad \text{for $0\le m\le s-1$}.
\end{align*}
\end{remark}

\begin{example}
As noted in the introduction, Theorem~\ref{thm stern} reduces to Stern's
level-set identity in dimension $n=3$.
The first genuinely new case occurs in dimension $n=4$, where the associated
PDE system takes the simple form
\[
\begin{cases}
\Delta u_0
=
-\langle \nabla u_0,\nabla|\nabla_{\Sigma_1}u_1|\rangle,\\[0.3em]
\Delta_{\Sigma_1}u_1
=
0.
\end{cases}
\]
\end{example}

\bibliographystyle{plain}
\bibliography{literature.bib}

\end{document}